\newtheorem{theorem}{Theorem}[section]
\newtheorem{corollary}[theorem]{Corollary}
\newtheorem{example}{Example}
\newtheorem{observation}[theorem]{Observation}
\newtheorem{problem}[theorem]{Problem}
\newtheorem{remark}[theorem]{Remark}
\newtheorem{algorithm}{Algorithm}
\newenvironment{proof}[1][Proof]{\textbf{#1.} }{\ \rule{0.5em}{0.5em}}
\begin{document}

\title{ An algorithm for constructing doubly stochastic matrices for the inverse eigenvalue problem\tnoteref{t1}}
\tnotetext[t1]{This work is supported by the Lebanese university
research grants program}
\author[rvt]{Bassam Mourad\corref{cor1}\fnref{fn1}}
\ead{bmourad@ul.edu.lb}
\author[focal]{ Hassan Abbas}
\author[focal]{Ayman Mourad }
\author[focal]{Ahmad Ghaddar }
\author[els]{ Issam Kaddoura }
\cortext[cor1]{Corresponding author}
 \fntext[fn1]{Tel.:+961 3 784363; Fax:+961 7 768174.}
\address[rvt]{Department of Mathematics, Faculty of Science V, Lebanese University, Nabatieh,
Lebanon}
\address[focal]{  Department of
Mathematics,  Faculty of Science, Lebanese University, Beirut,
Lebanon}
\address[els]{  Department of
Mathematics,  Faculty  of Arts and Science, Lebanese International
University, Saida, Lebanon}

\begin{abstract}
In this note, we present an algorithm that yields many new methods
for constructing doubly stochastic and symmetric doubly stochastic
matrices for the inverse eigenvalue problem. In addition, we
introduce new open problems in this area that lay the ground for
future work.
\end{abstract}

\begin{keyword} doubly stochastic matrices \sep{ inverse eigenvalue problem}
\MSC{15A12, 15A18, 15A51}
\end{keyword}

\maketitle

\section{Introduction}

An $n\times n$ matrix with real entries is said to be
\textit{nonnegative } if all of its entries are nonnegative. An
$n\times n$ matrix $A$ over a field $\mathbf{F}$ ($\mathbf{F}$ is
either the real line $\mathbf{R}$ or the complex plane
$\mathbf{C})$ having each row and column sum equal to $r$, is said
to be an \textit{$r$-generalized doubly stochastic} matrix. The
set of all $n\times n$ $r$-generalized doubly stochastic matrices
with entries in $\mathbf{F}$ is denoted by $\Omega
^{r}(n,\mathbf{F})$. A \textit{generalized doubly stochastic}
matrix is an element of $\Omega (n,\mathbf{F})$ where
\[
\Omega (n,\mathbf{F})=\bigcup_{r\in \mathbf{F}}\Omega ^{r}(n,\mathbf{F}).
\]
Of special importance are the \emph{\ nonnegative} elements in
$\Omega ^{1}(n,\mathbf{R})$ which are called the \emph{doubly
stochastic} matrices. The theory of doubly stochastic matrices is
particularly endowed with a large collection of applications in
other area of mathematics and also in other disciplines (see for
example ~\cite{b:bap,i:in,i:ing,b:bru,c:chu,m:mo,s:sen,z:zy}). Let
the set of all $n\times n$ doubly stochastic matrices be denoted
by $\Delta _{n}$ and the set of all $n\times n$ symmetric elements
in $\Delta _{n}$ will be denoted by $ \Delta _{n}^{s}$. In
addition, let $\mathbf{M}(n,\mathbf{F})$ be the algebra of all $
n\times n$ matrices with entries in $\mathbf{F}$ and
$\mathbf{GL}(n,\mathbf{F} )$ be the \emph{general linear group
}over the field $\mathbf{F.}$ Finally, $I$ is defined as the
imaginary unit and for any matrix (or vector) $A$, its transpose
will be denoted by $A^T.$

The Perron-Frobenius theorem states that if $A$ is a nonnegative
matrix, then it has a real eigenvalue $r$ (that is the
Perron-Frobenius root) which is greater than or equal to the
modulus of each of the other eigenvalues. Also, $A$ has an
eigenvector $x$ corresponding to $r$ such that each of its entries
are nonnegative. Furthermore, if $A$ is irreducible then $r$ is
positive and the entries of $x$ are also positive (see
~\cite{b:ber,b:bha,h:hor,m:min,s:sen}). For doubly stochastic
matrices, $r=1$ and $x=e_{n}=(1,1,...,1)^T.$

An intriguing object of study in the area of matrix theory and
mathematical physics is that of the spectral properties and
inverse eigenvalue problems for special kinds of matrices. The
\textit{nonnegative inverse eigenvalue problem} (NIEP) is the
problem of finding necessary and sufficient conditions for an
$n$-tuples $ (\lambda_{1},\ldots,\lambda_{n})$ (possibly complex)
to be the spectrum of an $n\times n$ nonnegative matrix $A.$
Although this inverse eigenvalue problem has attracted a
considerable amount of interest, for $n>3$ it is still unsolved
except in restricted cases. Generally, we have two cases that
result in three problems.
\begin{itemize}
\item When $\lambda=(\lambda_{1},...,\lambda_{n})$ is complex,
little is known. The case $n\leq3$ was completely solved
in~\cite{l:loe} and the solution of the $4\times4$ trace-zero
nonnegative inverse eigenvalue problem was given in~\cite{r:re}.
 \item
 When the $n$-tuples $(\lambda_{1},\ldots,\lambda_{n})$ are real,
  then we have the following two problems:
\newline $1)$ The \textit{real nonnegative inverse eigenvalue
problem} (RNIEP) asks which sets of $n$ real numbers occur as the
spectrum of an $n\times n$ nonnegative matrix $A$.
\newline $2)$
The \textit{symmetric nonnegative inverse eigenvalue problem}
(SNIEP) asks which sets of $n$ real numbers can occur as the
spectrum of an $n\times n$ symmetric nonnegative matrix $A$.
\end{itemize}
Each problem remains open. Many partial results for the three
problems are known see the recent book~\cite{h:hog} and the
references therein, for the collection of all known results
concerning these problems. Various people raised the question
whether the (RNIEP) and (SNIEP) are generally equivalent. In the
low dimension $n\leq4,$ the two problems are actually equivalent
(see~\cite{e:eg}). However, for $n>4$, the paper~\cite{l:lo}
showed that the two problems are generally different. In addition,
the paper~\cite{e:eg} gives a construction for data
$\lambda=(\lambda_{1},..., \lambda_{5})$ which is a solution for
the (RNIEP) and there is no symmetric nonnegative $5\times 5$
matrix with spectrum $\lambda$.

 Another object of study in this area that has a big
interest is the inverse eigenvalue problem for nonnegative
matrices with extra properties. For example, we can consider the
same problems for doubly stochastic matrices. So that we have the
following problems.
 \begin{problem} The doubly stochastic inverse
eigenvalue problem denoted by (DIEP),  is the problem of
determining the necessary and sufficient conditions for a complex
$n$-tuples to be the spectrum of an $n\times n$ doubly stochastic
matrix. Equivalently, this problem can also be characterized as
the problem of finding the region $\Theta_n$ of $\mathbf{C}^n$
such that any point in $\Theta_n$ is the spectrum of an $n\times
n$ doubly stochastic matrix.
\end{problem}
Now, when the $n$-tuples $(\lambda_{1},\ldots,\lambda_{n})$ are
all real, then we have the following two problems:
 \begin{problem} The real
doubly stochastic inverse eigenvalue problem (RDIEP) asks which
sets of $n$ real numbers occur as the spectrum of an $n\times n$
doubly stochastic matrix. This problem is also equivalent to the
problem of finding the region $\Theta_n^r$ of $\mathbf{R}^n$ such
that any point in $\Theta_n^r$ is the spectrum of an $n\times n$
doubly stochastic matrix.
\end{problem}

\begin{problem} The symmetric doubly stochastic inverse eigenvalue
problem (SDIEP) asks which sets of $n$ real numbers occur as the
spectrum of an $n\times n$ symmetric doubly stochastic matrix.
Equivalently, this problem can also be characterized as the
problem of finding the region $\Theta_n^s$ of $\mathbf{R}^n$ such
that any point in $\Theta_n^s$ is the spectrum of an $n\times n$
symmetric doubly stochastic matrix.
\end{problem}
\begin{example}
 The point $\alpha=(1,-1/2+I\sqrt{3}/2,-1/2-I\sqrt{3}/2)$ is in $\Theta_3$ as $\alpha$ is
the spectrum of $A=\left(
\begin{array}{ccc}
0 & 1 & 0 \\
0 & 0 & 1 \\
1 & 0 & 0 \\
&
\end{array}
\right).$ On the other hand, the point $\lambda=(1,1/2,1/4)$ is in
$\Theta_3^r$ as $\lambda$ is the spectrum of $B=\left(
\begin{array}{ccc}
7/12 & 1/6 & 1/4 \\
1/12 & 2/3 & 1/4 \\
1/3 & 1/6 & 1/2 \\
&
\end{array}
\right).$ Moreover, $\lambda$ is in $\Theta_3^s$ as $\lambda$ is
also the spectrum of $C=\left(
\begin{array}{ccc}
13/24 & 7/24 & 1/6 \\
7/24 & 13/24 & 1/6 \\
1/6 & 1/6 & 2/3 \\
&
\end{array}
\right).$
\end{example}
 Although doubly stochastic matrices have been studied extensively,
 the (DIEP) and (RDIEP) have been considered in ~\cite{p:per,s:sot} where
 all the results obtained are partial.  The (SDIEP)
was studied in~\cite{h:hw,m:mo,m:mou,m:mour,r:rea}, and earlier
work can be found in~\cite{j:jo,p:per,s:sou} and all the results
obtained are also partial. For general $n,$ all three problems
remain open. In addition, the first and last of these problems
have been completely solved for $n=3$ in~\cite{p:per}. Now in a
similar manner to the (NIEP), one could raise the question whether
the (RDIEP) and (SDIEP) are generally equivalent. For $n=3,$ we
shall prove below that the two problems are actually equivalent.
While for $n\geq 4$, it remains a very interesting open problem.
In this paper, we are only able to obtain partial solutions
concerning all 3 problems.

Recall that $X\in \Omega ^{r}(n,\mathbf{F})$ if and only if
$Xe_{n}=re_{n}$ and $e_{n}^{T}X=re_{n}^{T}$ if and only if
$XJ_{n}=J_{n}X=rJ_{n},$ where $J_n$ is the $n\times n$ matrix with
each of its entries is equal to $\frac{1}{n}$. Hence for any $X\in
\Omega ^{r}(n,\mathbf{F})$, $e_{n}$ is an eigenvector for both $X$
and $X^{T}$ corresponding to the eigenvalue $r$. In addition, for
any $V$ in $\mathbf{GL}(n,\mathbf{F})$ such that the first row of
$V$ is equal to a multiple of $e_{n}^{T},$
 and each of the last $n-1$ row sums is zero, then clearly, its inverse
 $V^{-1}$ has its first column equal to a multiple of $e_{n}$ and each of the last $n-1$
column sums of $ V^{-1}$ is zero. Such $V$ is said to
have\textit{\  pattern} ${\mathcal{S}}.$ Note that any $n\times n$
matrix $V$ which is orthogonal and has its first column
$\frac{1}{\sqrt{n}}e_{n}$ has pattern ${\mathcal{S}}$. The main
results in this paper rely on the following observation for which
the proof can be found in~\cite{m:mou}.
 \begin{observation} Let
$V\in \mathbf{GL}(n,\mathbf{F})$ has pattern ${\mathcal{S}}$ and
$X\in \mathbf{M} (n-1,\mathbf{F})$, then $V^{-1}\left(
\begin{array}{cc}
r & 0 \\
0 & X \\
&
\end{array}
\right) V=A\in \Omega ^{r}(n,\mathbf{F}).$ Conversely, for any $A\in \Omega
^{r}(n,\mathbf{F})$ and any $V$ that has pattern $\mathcal{S}$, there exists
$X\in \mathbf{M}(n-1,\mathbf{F})$ such that $VAV^{-1}=\left(
\begin{array}{cc}
r & 0 \\
0 & X \\
&
\end{array}
\right) .$
\end{observation}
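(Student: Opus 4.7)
The plan is to reduce both directions of the observation to the characterization recalled immediately above the statement: $A \in \Omega^r(n,\mathbf{F})$ if and only if $Ae_n = re_n$ and $e_n^T A = re_n^T$. Thus everything amounts to checking these two eigenvector identities for the candidate $A$.

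First I would pin down the numerical consequences of $V$ having pattern $\mathcal{S}$. Write the first row of $V$ as $c e_n^T$ and the first column of $V^{-1}$ as $d e_n$ for some nonzero scalars $c,d$. Because the remaining $n-1$ rows of $V$ have zero row sum, $V e_n = cn\, e_1$; dually (using the zero column sums of $V^{-1}$), $e_n^T V^{-1} = dn\, e_1^T$. The $(1,1)$ entry of the identity $VV^{-1}=I$ then yields the key scalar relation $cnd = 1$.

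For the forward direction, write $D$ for the block-diagonal matrix with $r$ in the $(1,1)$ slot and $X$ in the trailing block, so that $De_1 = re_1$. Then
\[
A e_n = V^{-1}D V e_n = cn\, V^{-1} D e_1 = cnr\, V^{-1} e_1 = cnrd\, e_n = r e_n,
\]
where the last step uses $cnd=1$; the mirror computation on the left gives $e_n^T A = r e_n^T$, hence $A \in \Omega^r(n,\mathbf{F})$. For the converse, set $B = VAV^{-1}$ and exploit $A e_n = r e_n$ and $e_n^T A = r e_n^T$ to compute $B e_1 = V A (d e_n) = dr\, V e_n = drcn\, e_1 = r e_1$ and, symmetrically, $e_1^T B = r e_1^T$. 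This forces the first column and first row of $B$ to be $r e_1$ and $r e_1^T$, so $B$ has the desired block form with $r$ at the $(1,1)$ position, zeros along the rest of the first row and column, and an arbitrary $X \in \mathbf{M}(n-1,\mathbf{F})$ as the trailing block.

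The argument is essentially bookkeeping with the scalars $c$ and $d$; there is no real obstacle beyond tracking the normalization $cnd=1$, without which the eigenvalues would come out rescaled. No structural input about $V$ is needed beyond its pattern and its invertibility, which is already built into the hypothesis $V \in \mathbf{GL}(n,\mathbf{F})$.
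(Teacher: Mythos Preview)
Your argument is correct. The key identities $Ve_n = cn\,e_1$, $e_n^T V^{-1} = dn\,e_1^T$, and the normalization $cnd=1$ (from the $(1,1)$ entry of $VV^{-1}=I$) are exactly what is needed, and both directions then follow by the straightforward eigenvector computations you wrote down.

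As for comparison: the paper does not actually prove this observation. It states that ``the proof can be found in~\cite{m:mou}'' and moves on. So there is no in-paper argument to compare against; your self-contained proof fills in precisely what the paper delegates to the reference. The approach you take---reducing everything to the row/column eigenvector characterization $Ae_n = re_n$, $e_n^T A = re_n^T$ recalled just before the observation---is the natural and essentially only reasonable route, and is presumably what the cited paper does as well.
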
 In addition, an obvious necessary conditions for
  all three above problems are \begin{itemize}
\item
$\sum_{i=1}^{n}\lambda_{i}^{k}\geq0 \mbox{ for all natural number
}k
$
 which just means that the trace of the nonnegative matrix $A^{k}$ is
  nonnegative. \item $0\leq |\lambda_i| \leq 1$ for
  $i=2,...,n;$ as the Perron-Frobenius theorem insures.
  \item If $\lambda_i$ is a complex eigenvalue of a doubly stochastic
  matrix $D$ with nonzero imaginary part, then its conjugate $\bar{\lambda}_i$
  is also an eigenvalue of $D.$
  \end{itemize} Finally, we end this section with the following notation which is
used throughout this paper. Let $ \Lambda $ be the $n\times n$
diagonal matrix with diagonal entries $ 1,\lambda _{2},...,\lambda
_{n}$ with $1\geq \lambda _{2}\geq ...\geq \lambda _{n}\geq -1$
and $ trace(\Lambda)= 1+ \lambda _{2}+ ...+ \lambda _{n}\geq 0.$

\section{Construction of Matrices for the SDIEP}

In this section, we describe a way of obtaining partial solutions
for the (SDIEP). As mentioned earlier, this problem is completely
solved only for $n=3$. Indeed the following theorem is proved
in~\cite{p:per}.
\begin{theorem}~\cite{p:per}\label{th:th1}
 There exists a symmetric $3\times 3$ doubly stochastic matrix
 with spectrum $1,$ $\lambda,$ $\mu$ if and only if
  $-1\leq \lambda \leq 1,$ $-1\leq \mu\leq 1,$
$\lambda+3\mu+2\geq 0$ and $3\lambda+\mu+2\geq 0.$
\end{theorem}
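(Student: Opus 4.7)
The plan is to reduce the theorem to a planar geometry question via the centering $A = J_3 + \tilde A$ and then finish by a short case analysis. Since Perron--Frobenius immediately gives $-1 \leq \lambda, \mu \leq 1$, the real content lies in the two linear inequalities, and I would prove necessity and sufficiency simultaneously.

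I would set $\tilde A := A - J_3$, which is symmetric with vanishing row and column sums. By the observation of Section~1, applied with an orthogonal $V$ whose first row is $e_{3}^{T}/\sqrt{3}$, the eigenvalues of $\tilde A$ are $0, \lambda, \mu$. Because $\tilde A$ is symmetric with zero row sums, it is determined by its diagonal $(s_1, s_2, s_3)$ via $\tilde A_{ij} = s_k - (\lambda+\mu)/2$ whenever $\{i,j,k\} = \{1,2,3\}$, and unwinding $\mathrm{tr}(\tilde A^{2}) = \lambda^{2} + \mu^{2}$ yields
\[
\sum_{i=1}^{3} \Bigl(s_i - \tfrac{\lambda+\mu}{3}\Bigr)^{2} \;=\; \frac{(\lambda - \mu)^{2}}{6}.
\]
Thus $(s_1, s_2, s_3)$ traces a circle of radius $|\lambda - \mu|/\sqrt{6}$ centred at $\bigl(\tfrac{\lambda+\mu}{3},\tfrac{\lambda+\mu}{3},\tfrac{\lambda+\mu}{3}\bigr)$ inside the plane $\{\sum s_i = \lambda+\mu\}$. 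Entrywise nonnegativity of $A = J_3 + \tilde A$ becomes $s_i \geq -\tfrac{1}{3}$ and $s_k \geq \tfrac{\lambda+\mu}{2} - \tfrac{1}{3}$; that is, $(s_1, s_2, s_3)$ must lie in the equilateral triangle $\mathcal{T} = \{s : s_i \geq M\ \forall i\}$, where $M = \max\bigl\{-\tfrac{1}{3}, \tfrac{\lambda+\mu}{2} - \tfrac{1}{3}\bigr\}$.

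Both implications now reduce to a single planar fact: the circle meets $\mathcal{T}$ if and only if its radius is at most the distance from its centre to a vertex of $\mathcal{T}$. Since the centre of the circle coincides with the centroid of $\mathcal{T}$ (by $S_3$-symmetry in the $s_i$), and $\mathcal{T}$ is convex, the segment from the centre to any vertex lies entirely in $\mathcal{T}$, so a point of the circle in that direction lies in $\mathcal{T}$ iff the radius does not exceed the distance to that vertex; beyond this radius the whole circle lies outside $\mathcal{T}$. Computing the vertex-to-centroid distance as $\sqrt{2/3}\,|T - 3M|$ with $T = \lambda+\mu$, I would split: for $\lambda+\mu \geq 0$ the condition is $|\lambda - \mu| \leq 2 - \lambda - \mu$, automatic from $\lambda, \mu \leq 1$; for $\lambda+\mu < 0$ it is $|\lambda - \mu| \leq 2(1+\lambda+\mu)$, which on considering both orderings of $\lambda$ and $\mu$ is exactly $\lambda + 3\mu + 2 \geq 0$ together with $3\lambda + \mu + 2 \geq 0$. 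Since these two inequalities are automatic when $\lambda+\mu \geq 0$ and $\lambda, \mu \geq -1$, the single set of conditions in the statement covers both cases uniformly.

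The step I expect to be the main obstacle is justifying the circle-meets-triangle criterion and cleanly unwinding the two radii: the boundary behaviour when $\lambda+\mu = 0$ (so the two formulas for $M$ agree) and when the circle is tangent to a vertex of $\mathcal{T}$ (so equality holds in the inequalities) must be treated carefully. Once this planar fact is in place the algebra is routine, and, crucially, the same computation delivers both necessity and sufficiency at once.
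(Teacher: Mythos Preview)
Your argument is correct. Note, however, that the paper does not itself prove Theorem~\ref{th:th1}: the result is simply quoted from Perfect and Mirsky. What the paper \emph{does} prove (in Section~3, the theorem immediately preceding Corollary~3.3) is the necessity direction, and in fact in greater generality, for $3\times 3$ doubly stochastic matrices with real spectrum that need not be symmetric. That argument is purely algebraic: setting $x=\lambda+3\mu+2$ and $y=3\lambda+\mu+2$, one observes $x+y=4\,\mathrm{trace}(X)\geq 0$ and then expands $xy$ explicitly in the entries of $X$ to obtain $xy=3(q-r)^{2}+12(p+s)(p+q+r+s-1)+12ps\geq 0$, whence $x,y\geq 0$.

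Your route is genuinely different. By centering and parametrising the symmetric zero-row-sum matrices through their diagonals, you convert the question into whether a circle (fixed by $\lambda,\mu$) meets an equilateral triangle (fixed by the nonnegativity constraints), and the fact that the circle's centre is the triangle's centroid reduces everything to a single radius comparison. This delivers sufficiency and necessity in one stroke, which the paper's algebraic identity does not attempt; conversely, the paper's trace/determinant computation applies to non-symmetric $X$ as well, whereas your parametrisation (which relies on $\tilde A$ being determined by its diagonal) is specific to the symmetric case. One small point worth making explicit in your write-up: when $\lambda+\mu<-1$ the set $\mathcal{T}$ is empty and no matrix exists, which is consistent with the statement since summing the two target inequalities forces $\lambda+\mu\geq -1$.
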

For $n=4$, a conjecture is given in ~\cite{k:ka} and for $n\geq
4$, only partial solutions are known. Now recall that $\Delta_n^s$
is a convex polytope of dimension $\frac{1}{2}n(n-1)$, whose
vertices were determined in~\cite{k:kat} ( see also~\cite{c:cru})
where it has been proved that if $A$ is a vertex of $\Delta_n^s$,
then
 $A= \frac{1}{2}(P+P^T)$ for some permutation matrix $P$, although
  not every $\frac{1}{2}(P+P^T)$ is a vertex. In addition, $e_{n}$ is always an
eigenvector of any $n\times n$ doubly stochastic matrix $A$ so
that when $A$ is symmetric, then it is written as $A=V_{0}\Lambda
V_{0}^{T}$ for some $ V_{0}$ which is orthogonal and has pattern
$\mathcal{S}$. Therefore for a fixed such $V_{0}$ one can ask what
relation the $\{\lambda _{i}\}$ should satisfy for $V_{0}\Lambda
V_{0}^{T}$ to be symmetric doubly stochastic. Then the region
obtained in this way is a convex region of $ \mathbf{R}^{n}$ and
the largest possible subregion of $\Theta_n^s$ one could obtain in
this fashion, is attained when the columns of the matrix $V_{0}$
are the common set of eigenvectors of a maximum number of vertices
of $\Delta_n^s$ that mutually commute (note that this is the case
in ~\cite{p:per,s:sou}). However this only gives partial solutions
since the classification of all such $V_{0} $ seems to be a
difficult problem. Note that classifying all such $V_{0}$ is
equivalent to the problem of finding the collection $\mathfrak{D}$
of all vectors in $ \mathbf{R}^{n}$ that can serve as the set of
eigenvectors of an $n\times n$  symmetric doubly stochastic
matrix. Since $e_{n}$ is always an eigenvector which is orthogonal
to all other eigenvectors, then the collection
$\mathfrak{D}\backslash \{e_n\}$ is contained in the hyperplane of
$ \mathbf{R}^{n}$ which is orthogonal to $e_{n}.$ Equivalently, if
\ $e_{n},x_{2},...,x_{n}$ are the orthonormal eigenvectors of a
symmetric doubly stochastic matrix $A$ corresponding to the
eigenvalues $1,\lambda _{2},...,\lambda _{n}$\ respectively, then
$A= \frac{1}{n}e_{n}e_{n}^{T}+\lambda
_{2}x_{2}x_{2}^{T}+...+\lambda _{n}x_{n}x_{n}^{T}=$ $
J_{n}+\lambda _{2}x_{2}x_{2}^{T}+...+\lambda _{n}x_{n}x_{n}^{T}.$
Therefore, if $x_{2},...,x_{n}$ are $n-1$ vectors in
$\mathbf{R}^{n}$ such that the sum of the components in each of
them is zero, then giving $1\geq \lambda _{2}\geq ...\geq \lambda
_{n}\geq -1$, one can look at the conditions for which the matrix
$J_{n}+\lambda _{2}x_{2}x_{2}^{T}+...+\lambda _{n}x_{n}x_{n}^{T}$
is nonnegative. Next, we state the earliest result for the (SDIEP)
which is found in~ \cite{p:per}.
\begin{theorem}\cite{p:per} If $1\geq\lambda_{2}\geq ...\geq\lambda_{n}\geq -1$
and
\begin{equation}
\frac{1}{n}+\frac{1}{n(n-1)}\lambda_{2}+\frac{1}{(n-1)(n-2)}
\lambda_{3}+...+\frac{1}{(2)(1)}\lambda_{n} \geq0
\end{equation}
then there is a symmetric doubly-stochastic matrix $D$ such that
$D$ has eigenvalues $1,\lambda_{2},...,\lambda_{n}$.
\end{theorem}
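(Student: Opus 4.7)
The plan is to construct $D$ directly in the form $D = J_n + \sum_{k=2}^{n} \lambda_k x_k x_k^T$ suggested in the paragraph preceding the theorem, where $\{x_k\}_{k=2}^{n}$ is a convenient orthonormal basis of the hyperplane $e_n^\perp$. For any such choice, $D$ is automatically symmetric with spectrum $1,\lambda_2,\ldots,\lambda_n$ (with $e_n/\sqrt{n}$ the eigenvector for $1$), and the row sums are equal to $1$ because $J_n e_n = e_n$ and $x_k^T e_n = 0$. Consequently, the only nontrivial point is entrywise nonnegativity, and the freedom in choosing the basis is what will let us recast the single hypothesis (1) as the decisive constraint.

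I would use the Helmert-style basis
$$x_k = \frac{1}{\sqrt{(n-k+2)(n-k+1)}}\bigl(0,\ldots,0,\,-(n-k+1),\,1,\ldots,1\bigr)^T,\qquad k=2,\ldots,n,$$
where $x_k$ has $k-2$ leading zeros, the entry $-(n-k+1)$ in position $k-1$, and ones in positions $k$ through $n$. These vectors are manifestly orthonormal and orthogonal to $e_n$. A direct calculation of $(D)_{ij}=\frac{1}{n}+\sum_{k=2}^{n}\lambda_k(x_k)_i(x_k)_j$ shows that, with $S_i:=\frac{1}{n}+\sum_{k=2}^{i}\frac{\lambda_k}{(n-k+2)(n-k+1)}$, one has $(D)_{ij}=S_i-\frac{\lambda_{i+1}}{n-i+1}$ for every $j>i$, $(D)_{ii}=S_i+\frac{(n-i)\lambda_{i+1}}{n-i+1}$ for $i<n$, and $(D)_{nn}=S_n$. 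The key observation is that $(D)_{nn}=S_n$ is precisely the left-hand side of~(1), so the hypothesis already gives $(D)_{nn}\geq 0$.

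To finish, I would establish two monotonicity statements, each proved by a short algebraic manipulation that uses $\lambda_{i+1}\geq\lambda_{i+2}$. First, $(D)_{i+1,i+1}-(D)_{ii}=\frac{(n-i-1)(\lambda_{i+2}-\lambda_{i+1})}{n-i}\leq 0$, so the diagonal is non-increasing, and its minimum $(D)_{nn}$ is nonnegative by (1). Second, $(D)_{i+1,i+2}-(D)_{i,i+1}=\frac{\lambda_{i+1}-\lambda_{i+2}}{n-i}\geq 0$, so the off-diagonal entries, which depend only on the smaller index $i$, are non-decreasing in $i$; their minimum is $(D)_{12}=(1-\lambda_2)/n\geq 0$ because $\lambda_2\leq 1$. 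I expect the main obstacle to be identifying the right basis, namely one for which the single inequality (1) exactly captures the $(n,n)$ entry of $D$ while the remaining entries are forced to be nonnegative by monotonicity. Once this basis is selected the rest is routine bookkeeping, and the resulting $D$ is the desired symmetric doubly stochastic matrix.
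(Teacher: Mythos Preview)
Your argument is correct and is essentially the same construction the paper describes immediately after the theorem: your Helmert vectors $x_k$ are, up to the row-reversal permutation $i\mapsto n+1-i$ and an irrelevant sign, exactly the columns of the Soules matrix $V_n$, so your $D$ coincides with $V_n\Lambda V_n^T$ after a simultaneous row/column permutation. Consequently your claim that the minimum diagonal entry is $(D)_{nn}$ and equals the left-hand side of~(1) is precisely the paper's statement that the $a_{ii}$ are increasing with smallest value $a_{11}$, and your monotonicity check for the off-diagonal entries supplies the detail the paper leaves as ``easy to check.''
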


G. Soules~\cite{s:sou} generalized the above result by considering
the following orthogonal pattern ${\mathcal{S}}$ matrix (also
known in the literature as a Soules matrix):
$$
V_{n}=\left(
\begin{array}{ccccccc}
\frac{1}{\sqrt{n}} & \frac{1}{\sqrt{n(n-1)}} & \frac{1}{\sqrt{(n-1)(n-2)}} &
. & . & \frac{1}{\sqrt{6}} & \frac{1}{\sqrt{2}} \\
\frac{1}{\sqrt{n}} & \frac{1}{\sqrt{n(n-1)}} & \frac{1}{\sqrt{(n-1)(n-2)}} &
. & . & \frac{1}{\sqrt{6}} & \frac{-1}{\sqrt{2}} \\
\frac{1}{\sqrt{n}} & \frac{1}{\sqrt{n(n-1)}} & \frac{1}{\sqrt{(n-1)(n-2)}} &
. & . & \frac{-2}{\sqrt{6}} & 0 \\
\frac{1}{\sqrt{n}} & \frac{1}{\sqrt{n(n-1)}} & \frac{1}{\sqrt{(n-1)(n-2)}} &
. & . & 0 & 0 \\
\vdots & \vdots & \vdots & \vdots & \vdots & \vdots &  \\
\frac{1}{\sqrt{n}} & \frac{1}{\sqrt{n(n-1)}} & \frac{1}{\sqrt{(n-1)(n-2)}} &
. & . & 0 & 0 \\
\frac{1}{\sqrt{n}} & \frac{1}{\sqrt{n(n-1)}} & -\frac{n-2}{\sqrt{(n-1)(n-2)}}
& . & . & 0 & 0 \\
\frac{1}{\sqrt{n}} & -\frac{n-1}{\sqrt{n(n-1)}} & 0 & . & . & 0 & 0 \\
&  &  &  &  &  &
\end{array}
\right) .
$$

It is easy to check (see~\cite{s:sou}) that the symmetric matrix
$A=V_{n}\Lambda V_{n}^{T}$ has nonnegative off diagonal entries
while the $i$th diagonal entry of $A$ is given by the convex sum
\[
a_{ii}=\frac{1}{n}+\sum_{k=i+1}^{n}(\frac{1}{(k-1)k})\lambda
_{n-k+2}+(\frac{ i-1}{i})\lambda _{n-i+2},
\]
for $i=1,...,n.$ Moreover the $a_{ii}$ are increasing so the
smallest one is $a_{11}.$ So that if $\Gamma $ is the convex
region defined as the set of all n-tuples $(\lambda
_{1},...,\lambda _{n})$ satisfying: $1\geq \lambda _{2}\geq
...\geq \lambda _{n}\geq -1$ and $1+ \lambda _{2}+ ...+\lambda
_{n}\geq 0$ with $a_{ii}\geq 0 \mbox {  for  }i=1,...,n,$ then
each point in $\Gamma $ is a solution for the (SDIEP). Next,
Soules obtains a larger region than $\Gamma $ by constructing
another matrix $V_{\beta }$ from the matrix $V_{n}$ above as
follows:\newline Given an $s$-long strictly increasing sequence
$\beta $ with values in $ \{1,...,n\}$ where $0<s<n$, let
$\bar{\beta}$ be the $t$-long sequence complementary to $\beta $
$(t=n-s)$. Define the $s$-vector $
u=(\frac{1}{\sqrt{n}},...,\frac{1}{\sqrt{n}})^T\in \mathbf{R}^{s}$
and the $t$-vector
$\bar{u}=(\frac{1}{\sqrt{n}},...,\frac{1}{\sqrt{n}})^T\in
\mathbf{R}^{n-s}$. Now let
 $w=(\sqrt{\frac{n-s}{s}})u$ and $\bar{w}=-(\sqrt{\frac{s}{n-s}})\bar{u}$. Finally,
 define $B_{u}$ as the $s\times (s-1)$ matrix obtained from $V_s$
by deleting the first column and also $B_{\bar{u}}$ is defined
similarly. Next, for any $\beta $ define:
\[
V_{\beta }=\left(
\begin{array}{cccc}
u & w & B_{u} & 0_{1} \\
\bar{u} & \bar{w} & 0_{2} & B_{\bar{u}} \\
&  &  &
\end{array}
\right)
\]
where $0_{1}$ and $0_{2}$ are respectively the $s\times (t-1)$ and
$t\times (s-1)$ zero matrices. Then it is a routine computation to
check (see~\cite{s:sou}) that $V_{\beta }$ is orthogonal and the
symmetric matrix $A(\beta )=V_{\beta }\Lambda V_{\beta }^{T}$ has
nonnegative off diagonal entries. Now, if we let
$V_{\beta}^{\sigma }$ be the matrix obtained from $V_{\beta }$ by
permuting its columns according to a permutation $\sigma $ that
leaves columns $1$ and $2$ fixed and retains the relative order of
the columns in $B_{u}$ and $B_{\bar{u}}$ i.e.
\begin{equation}
\sigma _{j}=j\mbox{ for }j=1,2\mbox{ and }\sigma _{j}<\sigma
_{j+1} \mbox{ for }j\neq s+1,
\end{equation}
then again the symmetric matrix $A(\sigma ,\beta )=V_{\beta
}^{\sigma }\Lambda (V_{\beta }^{\sigma })^{T}$ would have
nonnegative off diagonal entries. Moreover, if $\alpha $ is an
$(s-1)$-long strictly increasing sequence with values in
$\{3,...,n\}$ defined by: $\alpha (j-2)=\sigma _{j}$ for
$j=3,...,s+1$, and $\bar{\alpha}$ is the $(t-1)$-long sequence
complementary to $\alpha $ defined by: $\bar{\alpha}(j-s-1)=\sigma
_{j}$ for $j=s+2,...,n,$ then each diagonal entry of $A(\sigma
,\beta )$ is either equal to $\Sigma_{\alpha}$ or
$\Sigma_{\bar{\alpha}}$ where $\Sigma_{\alpha}$ denote the
following convex sum:
$$
\Sigma_{\alpha}=\frac{1}{n}+\frac{n-s}{ns}\lambda_{2}+\sum
_{k=1}^{s-1}\frac{\lambda_{\alpha(s-k)}}{( k+1)k}.
$$
and $\Sigma_{\bar{\alpha}}$ is obtained from $\Sigma_{\alpha}$ by
replacing $\alpha$ with $\bar{\alpha}.$ Finally, let $n=2m+2$ for
$n$ even and $n=2m+1$ for $n$ odd. Now taking $ \alpha$ to be the
$m$-long sequence $\{3,5,7,...,n-2,n\}$, then it is easy to see
that $\Sigma_{\bar{\alpha}}$ can be obtained from
$\Sigma_{\alpha}$ by replacing $\lambda_{n-2k+2}$ with
$\lambda_{n-2k+1}$ for $k=1,...,m.$ So that $\Sigma_{\alpha}$ is
the smallest and then we have the following corollary:
\begin{corollary}\cite{s:sou} If $1 \geq\lambda_{2}\geq...\geq\lambda_{n}\geq -1$
 and
\begin{equation}
\frac{1}{n}+\frac{n-m-1}{n(m+1)}\lambda_{2}+\sum_{k=1}^{m}\frac{
\lambda_{n-2k+2}}{(k+1)k} \geq 0
\end{equation}
holds, where $n=2m+2$ for $n$ even and $n=2m+1$ for $n$ odd , then
there exists an $n\times n$ symmetric doubly stochastic matrix $D$
such that $D$ has eigenvalues $1,\lambda_{2},...,\lambda_{n}$.
\end{corollary}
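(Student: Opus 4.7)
The plan is to apply the Soules construction described above with a suitable choice of $s$, the sequence $\alpha$, and the permutation $\sigma$, so that the resulting symmetric matrix $A(\sigma,\beta)=V_{\beta}^{\sigma}\Lambda (V_{\beta}^{\sigma})^{T}$ has all entries nonnegative under the hypothesis of the corollary.

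First, I would set $s=m+1$, so that $t=n-s=n-m-1$. With this choice the coefficient $\frac{n-s}{ns}$ of $\lambda_{2}$ in $\Sigma_{\alpha}$ becomes $\frac{n-m-1}{n(m+1)}$, matching the coefficient in the hypothesized inequality. Next, I would select the permutation $\sigma$ (equivalently the sequence $\alpha$) so that $\alpha(s-k)=n-2k+2$ for $k=1,\ldots,m$. Listed in increasing order this is the $m$-long sequence $(n-2m+2,n-2m+4,\ldots,n-2,n)$, which reduces to $(4,6,\ldots,n)$ when $n$ is even and to $(3,5,\ldots,n)$ when $n$ is odd. For this $\alpha$,
$$
\Sigma_{\alpha}=\frac{1}{n}+\frac{n-m-1}{n(m+1)}\lambda_{2}+\sum_{k=1}^{m}\frac{\lambda_{n-2k+2}}{(k+1)k},
$$
which is precisely the left-hand side of the inequality in the statement.

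Second, I would check that $\Sigma_{\alpha}$ is the minimum among the diagonal entries of $A(\sigma,\beta)$. The complementary sequence is $\bar{\alpha}=(n-2m+1,n-2m+3,\ldots,n-1)$, and $\Sigma_{\bar{\alpha}}$ is obtained from $\Sigma_{\alpha}$ by replacing each $\lambda_{n-2k+2}$ with $\lambda_{n-2k+1}$. Since the $\lambda_{i}$ are nonincreasing, $\lambda_{n-2k+2}\leq \lambda_{n-2k+1}$, hence $\Sigma_{\alpha}\leq \Sigma_{\bar{\alpha}}$; more generally, because $\alpha$ was chosen to pick up at each step the largest-index (hence smallest) available $\lambda_{i}$, a direct index comparison shows that $\Sigma_{\alpha}$ is $\leq$ the analogous sum for any other admissible permutation $\sigma'$.

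Finally, the Soules construction already ensures that $A(\sigma,\beta)$ has nonnegative off-diagonal entries and that $V_{\beta}^{\sigma}$ is orthogonal with pattern $\mathcal{S}$, so by the observation recalled above the eigenvalues of $A(\sigma,\beta)$ are $1,\lambda_{2},\ldots,\lambda_{n}$. The hypothesis $\Sigma_{\alpha}\geq 0$ then forces every diagonal entry to be nonnegative, making $A(\sigma,\beta)$ the desired symmetric doubly stochastic matrix. I expect the main technical obstacle to be the index-bookkeeping in the second step, namely verifying that the chosen $\alpha$ really yields the smallest $\Sigma$ across all valid permutations $\sigma$; this reduces to comparing weighted sums of the $\lambda_{i}$ with positive weights $\frac{1}{k(k+1)}$ and exploiting the monotonicity $\lambda_{2}\geq \cdots \geq \lambda_{n}$. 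Everything else is a direct assembly of the machinery already built in the text.
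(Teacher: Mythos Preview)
Your proposal is correct and matches the paper's argument: set $s=m+1$, take $\alpha$ to be the $m$-long sequence $\{3,5,\ldots,n\}$ (respectively $\{4,6,\ldots,n\}$) for $n$ odd (respectively even), and use $\lambda_{n-2k+1}\geq\lambda_{n-2k+2}$ to conclude $\Sigma_{\alpha}\leq\Sigma_{\bar\alpha}$, so that the single hypothesis $\Sigma_{\alpha}\geq 0$ forces all diagonal entries of $A(\sigma,\beta)$ to be nonnegative. The ``main technical obstacle'' you anticipate is in fact unnecessary: for the fixed $\sigma$ you have chosen, only $\Sigma_{\alpha}$ and $\Sigma_{\bar\alpha}$ control the diagonal, so there is no need to minimize over all admissible permutations $\sigma'$.
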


\subsection{Remarks on Soules's results.}

We can improve on the above corollary by choosing a `better'
permutation than $\alpha$ above. By that we mean a permutation
that allows us to obtain a larger region of $\Theta_n^{s},$
although this happens on the account of not having a unified
condition for all cases of $n.$ Indeed, we consider the following
4 cases:
\begin{itemize}
\item{$\underline{n=4k+1}:$}\newline Taking $\alpha=\{
3,4,7,8,...,n-2,n-1\}$ and $\bar{\alpha}
=\{5,6,9,10,...,n-4,n-3,n\}$ then we obtain the region (4) defined
by:{\scriptsize
\begin{equation}
\left \{
\begin{array}
[c]{l}
\frac{1}{n}+\frac{n-m-1}{n(m+1)}\lambda_{2}+\frac{\lambda_{3}}{m(m+1)}
+\frac{\lambda_{4}}{(m-1)m}+\frac{\lambda_{7}}
{(m-2)(m-1)}+..+\frac{\lambda_{n-2}}{6}+\frac{\lambda_{n-1}}{2}
\geq0\\
\frac{1}{n}+\frac{n-m}{nm}\lambda_{2}+\frac{\lambda_{5}}{(m-1)m}+
\frac{\lambda_{6}}{(m-2)(m-1)}+\frac{\lambda_{9}}
{(m-3)(m-2)}+..+\frac{\lambda_{n-4}}{12}+\frac
{\lambda_{n-3}}{6 }+\frac{\lambda_{n}}{2} \geq0\\
\end{array}
\right .
\end{equation}}
\item $\underline{n=4k+3}:$ \newline let
$\alpha=\{3,4,7,8,...,n-4,n-3,n\}$ and then $\bar{\alpha}
=\{5,6,9,10,...,n-2,n-1\}$ and we obtain the region (5) defined
by: {\scriptsize
\begin{equation}
\left \{
\begin{array}
[c]{l}
\frac{1}{n}+\frac{n-m-1}{n(m+1)}\lambda_{2}+\frac{\lambda_{3}}{
m(m+1)}+\frac{\lambda_{4}}{(m-1)m}+\frac{\lambda_{7}}
{(m-2)(m-1)}+..+\frac{\lambda_{n-4}}{12}+\frac
{\lambda_{n-3}}{6} +\frac{\lambda_{n}}{2} \geq0 \\

\frac{1}{n}+\frac{n-m}{nm}\lambda_{2}+\frac{\lambda_{5}}{(m-1)m}+
\frac{\lambda_{6}}{(m-2)(m-1)}+\frac{\lambda_{9}}{(m-3)(m-2)}
+...+\frac{\lambda_{n-2}}{6}+\frac
{\lambda_{n-1}}{2 } \geq0 \\
\end{array}
\right .
\end{equation}
} \item $\underline{n=4k+2}:$ \newline Take $\alpha
=\{3,6,7,10,11,....,n-4,n-3,n\}$ and $\bar{\alpha}
=\{4,5,8,9,...,n-2,n-1\}$ and the region (6) obtained is :
{\scriptsize
\begin{equation}
\left \{
\begin{array}
[c]{l}
\frac{1}{n}+\frac{n-m-1}{n(m+1)}\lambda_{2}+\frac{\lambda_{3}}{
m(m+1)}+\frac{\lambda_{6}}{(m-1)m}+\frac{\lambda_{7}}
{(m-2)(m-1)}+...+\frac{\lambda_{n-4}}{12}+\frac{\lambda_{n-3}}{6}+\frac{\lambda_{n}}{2} \geq0 \\
\frac{1}{n}+\frac{n-m-1}{n(m+1)}\lambda_{2}+\frac{\lambda_{4}}{m(m+1)}+
\frac{\lambda_{5}}{(m-1)m}+\frac{\lambda_{8}}{(m-2)(
m-1)}+...+\frac{\lambda_{n-2}}{6}+\frac {\lambda_{n-1}}{2} \geq0 \\
\end{array}
\right .
\end{equation}
} \item $\underline{n=4k+4}$ :\newline Here
$\alpha=\{3,6,7,10,11,...,n-2,n-1\}$ and $\bar{\alpha}
=\{4,5,8,9,...,n-4,n-3,n\}$ and the region obtained (7) is given
by : {\scriptsize
\begin{equation}
\left \{
\begin{array}
[c]{l}
\frac{1}{n}+\frac{n-m-1}{n(m+1)}\lambda_{2}+\frac{\lambda_{3}}{
m(m+1)}+\frac{\lambda_{6}}{(m-1)m}+\frac{\lambda_{7}}{(m-2)(m-1)}+...+\frac{
\lambda_{n-2}}{6}+\frac{\lambda_{n-1}}{2} \geq0 \\
\frac{1}{n}+\frac{n-m-1}{n(m+1)}\lambda_{2}+\frac{\lambda_{4}}{m(m+1)}+
\frac{\lambda_{5}}{(m-1)m}+\frac{\lambda_{8}}{(m-2)(m
-1)}+...+\frac{\lambda_{n-4}}{12}+\frac {\lambda_{n-3}}{6}+\frac{
\lambda_{n}}{2} \geq0
\end{array}
\right .
\end{equation}
}
\end{itemize}

Inequality (3) can be rewritten as :{\scriptsize
\begin{equation}
\frac{1}{n}+\frac{n-m-1}{n(m+1)}\lambda_{2}+\frac{\lambda_{4}}{
m(m+1)}+\frac{\lambda_{6}}{(m-1)m}+\frac{\lambda_{8}}{(m-2)(m-1)}+...+
\frac{\lambda_{n-2}}{6}+\frac{ \lambda_{n}}{2} \geq0
\end{equation}
} for n even {\scriptsize
\begin{equation}
\frac{1}{n}+\frac{n-m-1}{n(m+1)}\lambda_{2}+\frac{\lambda_{3}}{
m(m+1)}+\frac{\lambda_{5}}{(m-1)m}+\frac{\lambda_{7}}{(m-2)(m-1)}+...+
\frac{\lambda_{n-2}}{6}+\frac{ \lambda_{n}}{2} \geq0
\end{equation}
} for n odd.

 Since the $\{ \lambda_i\}$ are in the decreasing order, then
  the left-hand side of (8) is less than
or equal each of the left-hand sides of both (6) and (7). Also,
note that $\frac{n-m}{nm}=\frac{n-m-1}{n(m+1)}+\frac {1}{m(m+1)}$
therefore the left-hand side of (9) is less than or equal each of
the left-hand sides of both (4) and (5). So that the regions (4),
(5), (6) and (7) are bigger. Here is an example that shows that
the region (8) is a proper subset of the region (6). Consider
$(1,1,1,-1/2,-1/2,-1)\in\mathbf{R^{6}}$ which clearly satisfies
(6) and not (8) for $n=6$.
\subsection{ New Methods for Constructing Symmetric Doubly
Stochastic Matrices} Here we describe an algorithm for practical
purposes that yields many new sufficient conditions for the
symmetric doubly stochastic inverse eigenvalue problem. In fact,
this can thought as a generalization of the construction described
above. In addition, this algorithm can also be  used with minor
changes to find solutions for the (DIEP) in the complex case (see
below). Before exploring this algorithm, it helps to think of the
following simple observation which is the building block of the
algorithm.
\begin{observation}
Let $x$ be an eigenvector of an $n\times n$ doubly stochastic
matrix $A$ corresponding to the eigenvalue $\lambda.$ If $0_p$
denote the $p\times 1$ zero vector, then
 $\left[
\begin{array}{c}
0_p \\
x
\end{array}
\right] $ and $\left[
\begin{array}{c}
x \\
0_p
\end{array}
\right] $ are respectively eigenvectors of the $(n+p)\times (n+p)$
doubly stochastic matrices $I_p\oplus A$ and $A\oplus I_p$
corresponding to $\lambda$.
\end{observation}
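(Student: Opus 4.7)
The plan is to settle both assertions (that $I_p \oplus A$ and $A \oplus I_p$ are doubly stochastic, and that the displayed vectors are $\lambda$-eigenvectors) by nothing more than the definition of a direct sum of matrices and a single block-matrix multiplication each. First I would note that if $A$ is doubly stochastic then so is the block diagonal matrix $I_p \oplus A$: each of its rows and columns coincides with a row/column of either $I_p$ or $A$, padded by zeros, and all of those sum to $1$. The same reasoning applies to $A \oplus I_p$.

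Next, I would verify the eigenvector assertions by direct computation using the conformable partition in which the top block has size $p$ and the bottom block has size $n$. Using $Ax = \lambda x$,
\[
(I_p \oplus A)\begin{bmatrix} 0_p \\ x \end{bmatrix} = \begin{bmatrix} I_p\, 0_p \\ A\, x \end{bmatrix} = \begin{bmatrix} 0_p \\ \lambda x \end{bmatrix} = \lambda \begin{bmatrix} 0_p \\ x \end{bmatrix},
\]
and the symmetric computation
\[
(A \oplus I_p)\begin{bmatrix} x \\ 0_p \end{bmatrix} = \begin{bmatrix} A\, x \\ I_p\, 0_p \end{bmatrix} = \begin{bmatrix} \lambda x \\ 0_p \end{bmatrix} = \lambda \begin{bmatrix} x \\ 0_p \end{bmatrix}
\]
handles the second case.

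There is no real obstacle; the statement is an immediate formal consequence of the block structure, and its role in what follows is purely as a convenient building block that lets one extend a known eigenpair of a small doubly stochastic matrix to a padded version of larger size. For this reason I would keep the proof a two-line computation rather than introducing any extra machinery.
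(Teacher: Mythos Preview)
Your proof is correct. The paper itself does not give a proof of this Observation; it is stated as a ``simple observation which is the building block of the algorithm'' and left to the reader, so your direct block-multiplication verification is exactly the natural argument the authors have in mind.
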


\begin{algorithm}
  \underline{Setp1}: For low dimension $k=2,3,4,5,6,...$ we consider all
  the vertices of $ \Delta _k^s.$ For a chosen one of these vertices,
  we find its eigenvectors (using Maple for example) as
orthonormal columns vectors
$\frac{1}{\sqrt{k}}e_{k}$,$ x_{2},...,x_{k}.$\\
 \underline{Setp2}:  From the computed eigenvectors in Step1, we form the $k\times k$
orthogonal pattern ${\mathcal{S}}$ matrix
$X_k=(\frac{1}{\sqrt{k}}e_{k}|x_{2}|...|x_{k})$ obtained from
these eigenvectors,  and if
 we let $ \Lambda_k $ be the $k\times k$
diagonal matrix with diagonal entries $ 1,\lambda _{2},...,\lambda
_{k}$ with $1\geq \lambda _{2}\geq ...\geq \lambda _{k}\geq -1,$
and $1+ \lambda _{2}+ ...+ \lambda _{k}\geq 0,$ then we check the
conditions such that $X_k\Lambda_k X_k^{T} \geq 0$ for which by
this construction, we always have solutions.\\
\underline{Setp3}:
Next, for $n>k$ we construct an $n \times n$ matrix $W_{n}$ by
taking the first $n-k+1$ columns of the Soules matrix $V_{n}$ and
the last $(k-1)$ columns $\left(
\begin{array}{c}
x_{2} \\
0 \\
\vdots \\
0
\end{array}
\right) $,...,$\left(
\begin{array}{c}
x_{k} \\
0 \\
\vdots \\
0
\end{array}
\right) $ as the $n$ columns of $W_n$. Then find the region for
which $W_{n}\Lambda W_{n}^T$ is nonnegative.\\
\underline{Setp4}: Here we start the improvement process as
follows. We construct another matrix $W_{\beta}$ which will be the
analogue of the matrix $V_{\beta}$ (defined above) in the
following manner: For $k<s<n$, let $u$ be the $s$-vector
$(\frac{1}{\sqrt{n}},...,\frac{1}{ \sqrt{n}})$, $\bar{u}$ be the
$(n-s)$-vector $(\frac {1}{\sqrt{n}},..., \frac{1}{\sqrt{n}})$,
$w=(\sqrt{\frac{n-s}{s}})u$ and
$\bar{w}=-(\sqrt{\frac{s}{n-s}})\bar{u}$ be defined as in the
previous section. Now define $W_{u}$ be the $(s-1)\times s$ matrix
obtained from the $s\times s$ matrix $W_{s}$ by deleting the first
column and $B_{\bar{u}}$ be the $ (n-s-1)\times (n-s)$ matrix
obtained from the Soules matrix $V_{n-s}$ by deleting the first
column. Then we distinguish between the two cases:
\begin{itemize}
\item
 Case $1$: For $n =k+1,$ we take $W_{n}=W_{k+1}$ and we then find
  the region for which $W_{k+1}\Lambda W_{k+1}^T$ is nonnegative.
\item
 Case $2$: For $n > k+1,$ we let
$$
W_{\beta}=\left(
\begin{array}{cccc}
u & w & 0_{1} & W_{u} \\
\bar{u} & \bar{w} & B_{\bar{u}} & 0_{2} \\
&  &  &
\end{array}
\right)
$$
$0_{1}$ and $0_{2}$ are zero matrices of suitable orders. Then for
any permutation $\alpha$ of the columns of $V_{\beta}$ which
preserves the relative order of columns in $W_{u}$ and
$B_{\bar{u}},$ the conditions for which $W_{\beta}\Lambda
W_{\beta}^T$ is nonnegative give a convex region
$\Gamma_{\alpha}\subset \Theta_n^{s} $ whose all points are
solutions for the (SDIEP). Finally, it is worth mentioning here
that the union of all $\Gamma_{\alpha}$ for all such permutation
$\alpha,$ gives a union of convex subsets in $\Theta_n^{s}$ whose
again all points are solutions for the (SDIEP).\end{itemize}
\underline{Setp5} Repeat the same process for a different chosen
vertex of $ \Delta _k^s.$
\end{algorithm}
\begin{remark} It should be noted that the above algorithm can
also be used by starting with any $k\times k$ doubly stochastic
matrix (i.e. any point of $\Delta_k^s$) that is not necessarily a
vertex of $ \Delta _k^s$ which is the version of Algorithm1 that
we will use for the case of (RDEIP) (see Section 3). However, the
advantage of taking a vertex lies in obtaining a larger subset of
$\Theta_k^{s}$ which in turn by exploiting Step3 and Step4 of
Algorithm1 gives a larger region of $\Theta_n^{s}.$ In addition,
the above algorithm can also be used not just for low dimension,
but for any dimension $k$ for which there exists a $k\times k$
permutation matrix (or any $k\times k$ doubly stochastic matrix)
$P$ such that the eigenvectors of $P$ can be computed.
\end{remark}
\begin{remark} With Algorithm 1, the Soules matrix $V_n$  can be
obtained by choosing the vertex $ P=\left(
\begin{array}{cc}
0 & 1  \\
1 & 0  \\
\end{array}
\right) $  of $\Delta_2^s$ in Step1 since
$(\frac{1}{\sqrt{2}},\frac{1}{\sqrt{2}})^T$ and
$(\frac{1}{\sqrt{2}},-\frac{1}{\sqrt{2}})^T$ are the eigenvectors
of $P.$
\end{remark}
\paragraph{\bf{Example 2}}

We apply Algorithm 1 on the matrix $X=\frac{1}{2} \left(
\begin{array}{cccc}
1 & 1 & 1 & 1 \\
1 & -1 & -1 & 1 \\
1 & -1 & 1 & -1 \\
1 & 1 & -1 & -1 \\
&  &  &
\end{array}
\right) $ which is the orthogonal pattern ${\mathcal{S}}$  matrix
that diagonalizes all the $ 4\times4$ zero-trace symmetric doubly
stochastic matrices (see~\cite{m:mour}). Note that the columns of
$X$ are the orthonormal eigenvectors of the following vertices of
$\Delta_4^s$:
$$p1=\left(
\begin{array}{cccc}
0 & 1 & 0 & 0 \\
1 & 0 & 0 & 0 \\
0 & 0 & 0 & 1 \\
0 & 0 & 1 & 0 \\
&  &  &
\end{array}
\right), \  \  p2=\left(
\begin{array}{cccc}
0 & 0 & 1 & 0 \\
0 & 0 & 0 & 1 \\
1 & 0 & 0 & 0 \\
0 & 1 & 0 & 0 \\
&  &  &
\end{array}
\right), \mbox{ and }  \  p3=\left(
\begin{array}{cccc}
0 & 0 & 0 & 1 \\
0 & 0 & 1 & 0 \\
0 & 1 & 0 & 0 \\
1 & 0 & 0 & 0 \\
&  &  &
\end{array}
\right)
$$
A simple matrix multiplication shows that
 $X
\Lambda_4 X^T=$
$$\frac{1}{2} \left(
\begin{array}{cccc}
1 & 1 & 1 & 1 \\
1 & -1 & -1 & 1 \\
1 & -1 & 1 & -1 \\
1 & 1 & -1 & -1 \\
&  &  &
\end{array}
\right) \left(
\begin{array}{cccc}
1 & 0 & 0 & 0 \\
0 & \lambda_2 & 0 & 0 \\
0 & 0 & \lambda_3& 0 \\
0 & 0 & 0 & \lambda_4 \\
&  &  &
\end{array}
\right) \frac{1}{2} \left(
\begin{array}{cccc}
1 & 1 & 1 & 1 \\
1 & -1 & -1 & 1 \\
1 & -1 & 1 & -1 \\
1 & 1 & -1 & -1 \\
&  &  &
\end{array}
\right)=
$$

$$ \left(
\begin{array}{cccc}
\frac{1}{4}+\frac{\lambda_2}{4}+\frac{\lambda_3}{4}+\frac{\lambda_4}{4}
&
\frac{1}{4}-\frac{\lambda_2}{4}-\frac{\lambda_3}{4}+\frac{\lambda_4}{4}
&
\frac{1}{4}-\frac{\lambda_2}{4}+\frac{\lambda_3}{4}-\frac{\lambda_4}{4}
&
\frac{1}{4}+\frac{\lambda_2}{4}-\frac{\lambda_3}{4}-\frac{\lambda_4}{4} \\
\frac{1}{4}-\frac{\lambda_2}{4}-\frac{\lambda_3}{4}+\frac{\lambda_4}{4}
&
 \frac{1}{4}+\frac{\lambda_2}{4}+\frac{\lambda_3}{4}+\frac{\lambda_4}{4} &
  \frac{1}{4}+\frac{\lambda_2}{4}-\frac{\lambda_3}{4}-\frac{\lambda_4}{4} &
\frac{1}{4}-\frac{\lambda_2}{4}+\frac{\lambda_3}{4}-\frac{\lambda_4}{4}\\
\frac{1}{4}-\frac{\lambda_2}{4}+\frac{\lambda_3}{4}-\frac{\lambda_4}{4}
&
\frac{1}{4}+\frac{\lambda_2}{4}-\frac{\lambda_3}{4}-\frac{\lambda_4}{4}
&
\frac{1}{4}+\frac{\lambda_2}{4}+\frac{\lambda_3}{4}+\frac{\lambda_4}{4}
&
 \frac{1}{4}-\frac{\lambda_2}{4}-\frac{\lambda_3}{4}+\frac{\lambda_4}{4} \\
\frac{1}{4}+\frac{\lambda_2}{4}-\frac{\lambda_3}{4}-\frac{\lambda_4}{4}
&
\frac{1}{4}-\frac{\lambda_2}{4}+\frac{\lambda_3}{4}-\frac{\lambda_4}{4}
&
\frac{1}{4}-\frac{\lambda_2}{4}-\frac{\lambda_3}{4}+\frac{\lambda_4}{4}&
\frac{1}{4}+\frac{\lambda_2}{4}+\frac{\lambda_3}{4}+\frac{\lambda_4}{4} \\
&  &  &
\end{array}
\right).
$$
Since the eigenvalues of $\Lambda_4$ are in the decreasing order
and $trace(\Lambda_4)\geq 0,$ then the only condition for which $X
\Lambda_4 X^T$ is nonnegative is simply given by :
\begin{equation}
\lambda_{1}-\lambda_{2}-\lambda_{3}+\lambda_{4}\geq0.
\end{equation}
Next, we construct the $n\times n$ orthogonal matrix $W_{n}$ which
in this case, is given by:
$$
W_{n}=\left(
\begin{array}{cccccccc}
\frac{1}{\sqrt{n}} & \frac{1}{\sqrt{n(n-1)}} &
\frac{1}{\sqrt{(n-1)(n-2)}} & \ldots & \frac{1}{\sqrt{20}} &
\frac{1}{2} & \frac{1}{2} &
\frac{1}{2} \\
\frac{1}{\sqrt{n}} & \frac{1}{\sqrt{n(n-1)}} &
\frac{1}{\sqrt{(n-1)(n-2)}} & \ldots & \frac{1}{\sqrt{20}} &
\frac{-1}{2} & \frac{-1}{2} &
\frac{1}{2} \\
\frac{1}{\sqrt{n}} & \frac{1}{\sqrt{n(n-1)}} &
\frac{1}{\sqrt{(n-1)(n-2)}} & \ldots & \frac{1}{\sqrt{20}} &
\frac{-1}{2} & \frac{1}{2} &
\frac{-1}{2} \\
\frac{1}{\sqrt{n}} & \frac{1}{\sqrt{n(n-1)}} &
\frac{1}{\sqrt{(n-1)(n-2)}} & \ldots & \frac{1}{\sqrt{20}} &
\frac{1}{2} & \frac{-1}{2} &
\frac{-1}{2} \\
\frac{1}{\sqrt{n}} & \frac{1}{\sqrt{n(n-1)}} & \frac{1}{\sqrt{(n-1)(n-2)}} &
\ldots & \frac{-4}{\sqrt{20}} & 0 & 0 & 0 \\
\frac{1}{\sqrt{n}} & \frac{1}{\sqrt{n(n-1)}} & \frac{1}{\sqrt{(n-1)(n-2)}} &
\ldots & 0 & 0 & 0 & 0 \\
. & . & . & . & . & . & . & . \\
\frac{1}{\sqrt{n}} & \frac{1}{\sqrt{n(n-1)}} & -\frac{n-2}{\sqrt{(n-1)(n-2)}}
& \ldots & 0 & 0 & 0 & 0 \\
\frac{1}{\sqrt{n}} & -\frac{n-1}{\sqrt{n(n-1)}} & 0 & \ldots & 0 & 0 & 0 & 0
\\
&  &  &  &  &  &  &
\end{array}
\right) .
$$
 Then a simple matrix multiplication shows the matrices $A=W_{n}\Lambda W_{n}^{T}$ and $B=V_{n}\Lambda
 V_{n}^{T}$ differ only by the $4\times 4$ principal submatrices
 formed from the first 4 rows and the first 4 columns of $A$ and
 $B$ i.e. $A= \left(
\begin{array}{cc}
X & C  \\
C^T & D
\end{array}
\right)  $ and $B= \left(
\begin{array}{cc}
Y & C  \\
C^T & D
\end{array}
\right)  $  where $X$ and $Y$ are $4\times 4$ matrices. So that
by~\cite{s:sou} all the entries of the symmetric matrix
$A=W_{n}\Lambda W_{n}^{T}$ are nonnegative except for the diagonal
entries and for $a_{12}= a_{21}=a_{34}=a_{43}=$
$$
\frac{1}{n}+\frac{\lambda_{2}}{n(n-1)}+\frac{\lambda_{3}}{
(n-1)(n-2)}+
\frac{\lambda_{4}}{(n-2)(n-3)}+...+\frac{\lambda_{n-3}}{20}-\frac{
\lambda_{n-2}}{4}-\frac{\lambda_{n-1}}{4} +\frac{\lambda_{n}}{4}.
$$
In addition,  the first $4$ diagonal entries are equal to:
$a_{11}=a_{22}=a_{33}=a_{44}=$
$$
\frac{1}{n}+\frac{\lambda_{2}}{n(n-1)}+\frac{\lambda_{3}}{
(n-1)(n-2)}+
\frac{\lambda_{4}}{(n-2)(n-3)}+...+\frac{\lambda_{n-3}}{20}+\frac{
\lambda_{n-2}}{4}+\frac{\lambda_{n-1}}{4}+ \frac{\lambda_{n}}{4}.
$$
and the remaining diagonal entries $a_{55},...,a_{nn}$ are increasing so the
smallest one is

$$
a_{55}=\frac{1}{n}+\frac{\lambda_{2}}{n(n-1)}+\frac{\lambda
_{3}}{(n-1)(n-2)}+...+\frac{\lambda_{n-4}}{30}+
\frac{16}{20}\lambda_{n-3}.
$$

We can rewrite
$$
a_{55}= \frac{1}{n}+\frac{\lambda_{2}}{n(n-1)}+\frac {
\lambda_{3}}{(n-1)(n-2)}+...+\frac{\lambda_{n-3}}{20}+
\frac{\lambda_{n-3}}{4}+
\frac{\lambda_{n-3}}{4}+\frac{\lambda_{n-3}}{4}.
$$
As $1\geq\lambda_{2}\geq...\geq\lambda_{n}\geq -1,$  then clearly
$a_{11}\leq a_{55}$ and therefore the conditions for which $A=
W_{n}^{T}\Lambda W_{n} $ is nonnegative are:
\begin{equation}
\left\{
\begin{array}{ll}
a_{11} & \geq0 \\
a_{12} & \geq0 \\
&
\end{array}
\right.
\end{equation}
Hence we have the following theorem.
\begin{theorem}
Let $1\geq\lambda_{2}\geq ...\geq\lambda_{n}\geq -1$ and
$1+\lambda_{2}+ ...+\lambda_{n}\geq 0.$ If
$$
\left \{
\begin{array}
[c]{l} \frac{1}{n}+\frac{\lambda_{2}}{n(n-1)}+\frac{\lambda_{3}}{
(n-1)(n-2)}+
\frac{\lambda_{4}}{(n-2)(n-3)}+...+\frac{\lambda_{n-3}}{20}+\frac{
\lambda_{n-2}}{4}+\frac{\lambda_{n-1}}{4}+ \frac{\lambda_{n}}{4} \geq0 \\
\frac{1}{n}+\frac{\lambda_{2}}{n(n-1)}+\frac{\lambda_{3}}{
(n-1)(n-2)}+
\frac{\lambda_{4}}{(n-2)(n-3)}+...+\frac{\lambda_{n-3}}{20}-\frac{
\lambda_{n-2}}{4}-\frac{\lambda_{n-1}}{4} +\frac{\lambda_{n}}{4}
\geq0
\end{array}
\right .
$$
then there is an $n\times n$ symmetric doubly stochastic matrix
$D$ such that $D$ has eigenvalues $1,\lambda_{2},...,\lambda_{n}.$
\end{theorem}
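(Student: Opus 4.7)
The plan is to realize the prescribed spectrum by the explicit matrix $D=W_n\Lambda W_n^{T}$, where $W_n$ is the orthogonal matrix displayed just above the statement and $\Lambda=\mathrm{diag}(1,\lambda_2,\ldots,\lambda_n)$. First I would check that $W_n$ is orthogonal and has pattern $\mathcal{S}$. The first $n-3$ columns of $W_n$ coincide with those of the Soules matrix $V_n$, for which orthonormality and the zero-sum property of columns $2,\ldots,n-3$ are classical. The last three columns of $W_n$ are the last three columns of the Hadamard-type matrix $X$ extended by zeros in rows $5,\ldots,n$; these are mutually orthonormal and zero-summed. The two blocks of columns are orthogonal to each other because the last three columns of $W_n$ vanish outside rows $1$--$4$, while the restriction of any one of the first $n-3$ Soules columns to rows $1$--$4$ is a constant vector, against which any zero-sum $4$-vector is orthogonal. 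The Observation then gives $D\in\Omega^{1}(n,\mathbf{R})$, and since $W_n$ is orthogonal and $\Lambda$ diagonal we obtain $D^{T}=D$ with spectrum $1,\lambda_2,\ldots,\lambda_n$.

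It remains to force $D$ to be entrywise nonnegative. My second step would be to compare $D$ with the Soules matrix $B=V_n\Lambda V_n^{T}$. Because $W_n$ and $V_n$ agree in columns $1,\ldots,n-3$, and the three columns in which they differ are supported only in rows $1$--$4$, the rank-one updates distinguishing $D$ from $B$ vanish outside the leading $4\times 4$ principal block. Thus every entry of $D$ outside this block equals the corresponding entry of $B$; by Soules's theorem each such entry is nonnegative as soon as the smallest Soules diagonal $a_{55}$ is nonnegative, and I will show below that this is implied by the hypothesis.

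Third, I would compute the entries inside the $4\times 4$ block. Writing $C=\frac{1}{n}+\frac{\lambda_{2}}{n(n-1)}+\cdots+\frac{\lambda_{n-3}}{20}$ for the common contribution of the first $n-3$ columns (constant on the block because rows $1$--$4$ of $V_n$ agree in those columns), the Hadamard-like structure of $X$ yields exactly one value on the diagonal and three values off the diagonal: the common diagonal entry is $a_{11}=C+\frac{\lambda_{n-2}+\lambda_{n-1}+\lambda_n}{4}$, while the six off-diagonal entries split into three pairs according to which of $\lambda_{n-2},\lambda_{n-1},\lambda_n$ receives the $+$ sign. Using $\lambda_{n-2}\geq\lambda_{n-1}\geq\lambda_n$ it is immediate that the smallest of these three off-diagonal values is $a_{12}=a_{34}=C+\frac{-\lambda_{n-2}-\lambda_{n-1}+\lambda_n}{4}$. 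Finally, the identity
$$a_{55}-a_{11}=\tfrac{1}{4}\bigl((\lambda_{n-3}-\lambda_{n-2})+(\lambda_{n-3}-\lambda_{n-1})+(\lambda_{n-3}-\lambda_{n})\bigr)\geq 0$$
combined with the Soules monotonicity $a_{55}\leq a_{66}\leq\cdots\leq a_{nn}$ shows $a_{11}\leq a_{kk}$ for all $k\geq 5$, so nonnegativity of every diagonal entry of $D$ reduces to $a_{11}\geq 0$. Putting everything together, $D\geq 0$ is equivalent to the two inequalities $a_{11}\geq 0$ and $a_{12}\geq 0$ of the theorem.

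The main technical obstacle is precisely the sign-pattern bookkeeping in step three: explicitly identifying the four distinct entry values produced by $X$ inside the $4\times 4$ block, verifying that $a_{12}$ is the unique minimizer among the off-diagonal values, and reconciling the Soules monotonicity statement with the hypothesis $a_{11}\geq 0$. Once this case analysis is carried out, the construction $D=W_n\Lambda W_n^{T}$ delivers the required symmetric doubly stochastic matrix.
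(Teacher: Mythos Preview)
Your proposal is correct and follows essentially the same route as the paper: form $D=W_n\Lambda W_n^{T}$, observe that $D$ and $B=V_n\Lambda V_n^{T}$ differ only in the leading $4\times 4$ block, invoke Soules for the entries outside that block, and then reduce nonnegativity inside the block to the two inequalities $a_{11}\geq 0$ and $a_{12}\geq 0$ via the comparison $a_{11}\leq a_{55}$. Your write-up is in fact slightly more explicit than the paper's in verifying the orthogonality of $W_n$ and in checking that $a_{12}$ is the minimum among the three off-diagonal values in the $4\times 4$ block.
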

To improve upon the results of the above theorem, we consider the
matrix $ W_{\beta}=\left(
\begin{array}{cccc}
u & w & 0_{1} & W_{u} \\
\bar{u} & \bar{w} & B_{\bar{u}} & 0_{2} \\
&  &  &
\end{array}
\right) . $ Now let $\alpha$ be the permutation of the columns of
$ W_{\beta}$ (which of course preserves the relative order of
columns in $W_{u}$ and $B_{\bar{u}}$)  given by:
\begin{itemize}
\item For $n=2m+2$ even and $s=m+1$ and
 $\alpha =\{4,6,8,...,n-6,n-3,n-1,n\}$ then the new improved
conditions become:

{\scriptsize
\begin{equation}
\left \{
\begin{array}
[c]{l}
\frac{1}{n}+\frac{n-m-1}{n(m+1)}\lambda_{2}+\frac{\lambda_{4}}{
m(m+1)}+\frac{\lambda_{6}}{(m-1)m}+
\frac{\lambda_{8}}{(m-2)(m-1)}+...+\frac{
\lambda_{n-6}}{20}+\frac{\lambda_{n-3}}{4}+\frac{\lambda_{n-1}}{4}+
\frac{ \lambda_{n}}{4}\geq0\\
\frac{1}{n}+\frac{n-m-1}{n(m+1)}\lambda_{2}+\frac{\lambda_{4}}{
m(m+1)}+\frac{\lambda_{6}}{(m-1)m}+
\frac{\lambda_{8}}{(m-2)(m-1)}+...+\frac{
\lambda_{n-6}}{20}-\frac{\lambda_{n-3}}{4}-\frac{\lambda_{n-1}}{4}+
\frac{ \lambda_{n}}{4}\geq0 \\
\end{array}
\right .
\end{equation} }

\item  While for $n=2m+1$ odd  and $s=m+1,$
 and $\alpha =\{3,5,7,...,n-6,n-3,n-1,n\}$ the new improved
conditions become: {\scriptsize
\begin{equation}
\left \{
\begin{array}
[c]{l}
\frac{1}{n}+\frac{n-m-1}{n(m+1)}\lambda_{2}+\frac{\lambda_{3}}{
m(m+1)}+\frac{\lambda_{5}}{(m-1)m}+
\frac{\lambda_{7}}{(m-2)(m-1)}+...+\frac{
\lambda_{n-6}}{20}+\frac{\lambda_{n-3}}{4}+\frac{\lambda_{n-1}}{4}+
\frac{ \lambda_{n}}{4}\geq0\\
\frac{1}{n}+\frac{n-m-1}{n(m+1)}\lambda_{2}+\frac{\lambda_{3}}{
m(m+1)}+\frac{\lambda_{5}}{(m-1)m}+
\frac{\lambda_{7}}{(m-2)(m-1)}+...+\frac{
\lambda_{n-6}}{20}-\frac{\lambda_{n-3}}{4}-\frac{\lambda_{n-1}}{4}+
\frac{ \lambda_{n}}{4}\geq0\\
\end{array}
\right .
\end{equation} }
\end{itemize}

\section{The Real Inverse Eigenvalue Problem For Doubly Stochastic Matrices }
First as mentioned earlier, this problem has been considered
in~\cite{s:sot} where the following theorem has been obtained.
\begin{theorem}~\cite{s:sot}\label{th:th2}
Let $\sigma=\{1,\lambda_2,...,\lambda_n\}$ be a set of real
numbers such that $$ 1\geq\lambda_2\geq...\geq\lambda_r\geq 0\geq
\lambda_{r+1}\geq ...\geq\lambda_n.$$ If
$1\geq\lambda_2+n\mbox{max}\{|\lambda_2|,|\lambda_n|\},$ then
there exists an $n\times n$ doubly stochastic matrix with spectrum
$\sigma.$
\end{theorem}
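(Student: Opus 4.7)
The plan is to construct the realization explicitly using Observation~1.1: take $V\in\mathbf{GL}(n,\mathbf{R})$ of pattern $\mathcal{S}$, set $X=\mathrm{diag}(\lambda_2,\ldots,\lambda_n)$, and form $A=V^{-1}\mathrm{diag}(1,X)V$. By Observation~1.1, $A$ automatically lies in $\Omega^1(n,\mathbf{R})$ and, since $X$ is diagonal with the prescribed entries, $A$ has spectrum $\{1,\lambda_2,\ldots,\lambda_n\}$. Thus the entire content of the theorem is to choose $V$ so that the hypothesis $\lambda_2+n\max\{|\lambda_2|,|\lambda_n|\}\le 1$ forces $A\ge 0$ entrywise.

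To verify nonnegativity, I would work with the spectral decomposition
\[
A=\tfrac{1}{n}J_n+\sum_{k=2}^{n}\lambda_k\,x_k y_k^T,
\]
where $x_k$ is the $k$th column of $V^{-1}$ and $y_k^T$ is the $k$th row of $V$, both satisfying $e_n^T x_k=0$ and $y_k^T e_n=0$. Splitting the sum along the sign index $r$ gives
\[
a_{ij}-\tfrac{1}{n}=\sum_{k=2}^{r}\lambda_k(x_k)_i(y_k)_j+\sum_{k=r+1}^{n}\lambda_k(x_k)_i(y_k)_j,
\]
where the first block is bounded in magnitude by $\lambda_2$ times the corresponding partial sum of $|(x_k)_i(y_k)_j|$ and the second by $|\lambda_n|$ times its partial sum. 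The target is to pick $V$ so that on the off-diagonal the total partial sum stays below $1$ and on the diagonal it absorbs an extra $\lambda_2/n$, which together with $\max\{|\lambda_2|,|\lambda_n|\}\le (1-\lambda_2)/n$ forces $a_{ij}\ge 0$. In effect, the hypothesis is calibrated so that the worst-case entrywise deviation from $\tfrac{1}{n}$ exactly matches $\tfrac{1}{n}$.

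The main obstacle is identifying the correct $V$ so that the entry-by-entry bound matches the sharp hypothesis. A symmetric orthogonal choice such as a Soules matrix yields only the weaker estimate $|a_{ij}-\tfrac{1}{n}|\le(1-\tfrac{1}{n})\max\{|\lambda_2|,|\lambda_n|\}$ from Cauchy--Schwarz, which gives the qualitatively similar but strictly coarser condition $\max\{|\lambda_2|,|\lambda_n|\}\le\tfrac{1}{n-1}$; in particular it does not know about the separate $\lambda_2$ term in the hypothesis. The extra freedom available in $\Omega^1(n,\mathbf{R})\setminus\Delta_n^s$ must therefore be exploited by allowing a non-symmetric $V$, most likely one whose inverse has flat columns of order $\tfrac{1}{n}$ (for instance, arising from a cyclic or companion-type realization of the spectrum on $e_n^{\perp}$). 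Once such a $V$ is fixed, the remaining verification is a direct arithmetic estimate that mirrors the inequality $\lambda_2+n\max\{|\lambda_2|,|\lambda_n|\}\le 1$ entry by entry.
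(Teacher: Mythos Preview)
The paper does not prove this theorem; it is quoted verbatim from \cite{s:sot} and no argument is given. So there is no in-paper proof to compare against, and I can only assess your proposal on its own merits.

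As it stands, the proposal is a plan rather than a proof, and it has a genuine gap at the decisive step. You correctly invoke the observation that any $V$ of pattern $\mathcal{S}$ yields $A=V^{-1}\mathrm{diag}(1,\lambda_2,\ldots,\lambda_n)V\in\Omega^1(n,\mathbf{R})$ with the desired spectrum, reducing the problem to choosing $V$ so that the hypothesis forces $A\ge 0$. But you then write that ``the main obstacle is identifying the correct $V$'' and never resolve it: the remarks about a ``cyclic or companion-type'' $V$ with ``flat columns of order $\tfrac{1}{n}$'' are speculation, not a construction, and the entrywise bound you aim for is asserted for an unspecified $V$ rather than derived from a concrete one. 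Since, as you yourself say, the entire content of the theorem lies in this choice, the argument is incomplete precisely where the work is.

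There is also reason to doubt that the fixed-$V$ route reaches Soto's exact condition without further ideas. For each fixed $V$ the nonnegativity region $\{(\lambda_2,\ldots,\lambda_n):V^{-1}\Lambda V\ge 0\}$ is a polyhedron cut out by $n^2$ linear inequalities, generically involving \emph{all} the $\lambda_i$; Soto's hypothesis couples only $\lambda_2$ and $\lambda_n$. The explicit $V$'s worked out in this paper (the Soules matrix $V_n$, the matrix of Theorem~3.4) all produce conditions that the paper itself notes are \emph{different} from Soto's---see the remark immediately following Theorem~3.4. So none of the $V$'s on hand reproduces $1\ge\lambda_2+n\max\{|\lambda_2|,|\lambda_n|\}$, and you have not exhibited one that does. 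To turn the proposal into a proof you would need either to display such a $V$ and carry out the entrywise verification explicitly, or to abandon the fixed-eigenbasis framework in favor of Soto's original construction.
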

 In addition, the case $n=3$ has been used in a proof of a theorem
concerning the (SDIEP) in~\cite{p:per} in which the following
result holds.
\begin{theorem}
 Let $
X=\left(
\begin{array}{ccc}
p & q & 1-p-q  \\
r & s & 1-r-s \\
1-p-r & 1-q-s & p+q+r+s-1 \\
\end{array}
\right)$ be a doubly stochastic matrix with real eigenvalues $1,$
$\lambda,$ $\mu.$  Then $-1\leq \lambda \leq 1,$ $-1\leq \mu\leq
1,$ $\lambda+3\mu+2\geq 0$ and $3\lambda+\mu+2\geq 0.$
\end{theorem}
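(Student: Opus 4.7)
The plan is to split the four inequalities into two groups. The bounds $|\lambda|,|\mu|\le 1$ follow at once from the Perron--Frobenius theorem recalled in the introduction: since $X$ is doubly stochastic, its Perron root equals $1$, so every eigenvalue of $X$ is bounded in modulus by $1$; in particular the real eigenvalues $\lambda$ and $\mu$ both lie in $[-1,1]$.

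For the two linear inequalities $\lambda+3\mu+2\ge 0$ and $3\lambda+\mu+2\ge 0$ I would use a sum--and--product argument. The sum is immediate,
\[
(3\lambda+\mu+2)+(\lambda+3\mu+2) \;=\; 4(\lambda+\mu)+4 \;=\; 4\,\mathrm{tr}(X) \;\ge\; 0,
\]
since the diagonal entries of $X$ are nonnegative. Hence it will suffice to show that the product $(3\lambda+\mu+2)(\lambda+3\mu+2)$ is also nonnegative: combined with a nonnegative sum this forces both factors to be $\ge 0$.

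To handle the product I would use $\lambda+\mu=\mathrm{tr}(X)-1$ and $\lambda\mu=\det X$ to rewrite it as
\[
(3\lambda+\mu+2)(\lambda+3\mu+2) \;=\; 3(\lambda+\mu)^{2}+4\lambda\mu+8(\lambda+\mu)+4 \;=\; (3\,\mathrm{tr}(X)-1)(\mathrm{tr}(X)+1)+4\det X.
\]
From the given parametrization one reads $\mathrm{tr}(X)=2p+2s+q+r-1$, and a cofactor expansion of $\det X$ along the first row (during which every cubic monomial in $p,q,r,s$ should cancel) ought to yield the compact formula $\det X = 3ps-3qr-p-s+q+r$. Substituting these into the previous display and regrouping terms, I expect to obtain the identity
\[
(3\,\mathrm{tr}(X)-1)(\mathrm{tr}(X)+1)+4\det X \;=\; 12(p+s)(p+q+r+s-1)+12ps+3(q-r)^{2},
\]
whose right-hand side is a sum of three manifestly nonnegative quantities: $p,s\ge 0$ give $p+s\ge 0$ and $ps\ge 0$; the corner entry $a_{33}=p+q+r+s-1$ is nonnegative; and $(q-r)^{2}\ge 0$ is automatic. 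This yields the required sign on the product and completes the proof.

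The main obstacle is the determinant calculation: expanding $\det X$ for a matrix whose nine entries mix the four unknowns $p,q,r,s$ produces a host of cubic cross-terms ($p^{2}s$, $pqr$, $q^{2}r$, $pqs$, and their counterparts), all of which must cancel in order to leave the compact polynomial $3ps-3qr-p-s+q+r$. This is routine but tedious bookkeeping; once it is secured, the clean factorization above drops out and the sum--and--product step closes the argument in a few lines.
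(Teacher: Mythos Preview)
Your proposal is correct and follows essentially the same route as the paper: Perron--Frobenius for the bounds on $\lambda,\mu$, then the sum--and--product trick, with the product rewritten via $\mathrm{tr}(X)$ and $\det X$ and finally identified with $3(q-r)^{2}+12(p+s)(p+q+r+s-1)+12ps$. The paper's proof is the same argument, only it skips your intermediate factorization $(3\,\mathrm{tr}(X)-1)(\mathrm{tr}(X)+1)$ and the explicit determinant formula, passing directly from $3[\mathrm{tr}(X)]^{2}+2\,\mathrm{tr}(X)+4\det X-1$ to the nonnegative expression in $p,q,r,s$.
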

\begin{proof}  The first two inequalities are obtained from the
Perron-Frobenius theorem. For the other two inequalities, let
$x=\lambda+3\mu+2$ and $y=3\lambda+\mu+2.$ We prove that $x$ and
$y$ are nonnegative by showing that their sum $x+y$ and their
product $xy$ are nonnegative. For,
$x+y=4\lambda+4\mu+4=4trace(X)\geq 0.$ Now
$xy=3(\lambda+\mu+1)^2+2(\lambda+\mu+1)+4\lambda\mu-1=3[trace(X)]^2+2trace(X)+4determinant(X)-1=3(q-r)^2
+12(p+s)(p+q+r+s-1)+12ps.$  As the entries of $X$ are nonnegative,
therefore $xy\geq 0,$  and the proof is complete.
\end{proof}\\
Combining the above theorem with Theorem~\ref{th:th1},
 we have the following conclusion.
\begin{corollary} The (RDIEP) and the (SDIEP) are equivalent for
the case $n=3$.
\end{corollary}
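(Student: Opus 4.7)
The plan is to observe that this corollary is an immediate consequence of chaining the two theorems just established, so essentially no new work is required; the corollary just packages them together.

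First I would spell out the two directions of the equivalence. The (SDIEP) $\Rightarrow$ (RDIEP) direction is trivial: every symmetric doubly stochastic matrix is in particular a doubly stochastic matrix, so any real $n$-tuple realized symmetrically is automatically realized by a doubly stochastic matrix. For $n=3$ this requires no hypotheses at all. The content is therefore entirely in the converse direction.

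For (RDIEP) $\Rightarrow$ (SDIEP) at $n=3$, I would argue as follows. Suppose $(1,\lambda,\mu)$ with $\lambda,\mu\in\mathbf{R}$ is the spectrum of some $3\times 3$ doubly stochastic matrix $X$. By the theorem immediately preceding the corollary (the $3\times 3$ eigenvalue inequalities derived from nonnegativity of the trace, $\mathrm{trace}(X^2)$ considerations, and the explicit factorization of $xy$ via the entries of $X$), we obtain the four inequalities
\[
-1\leq\lambda\leq 1,\qquad -1\leq\mu\leq 1,\qquad \lambda+3\mu+2\geq 0,\qquad 3\lambda+\mu+2\geq 0.
\]
These are exactly the four conditions appearing in Theorem~\ref{th:th1}, which is a complete characterization of symmetric doubly stochastic realizability in dimension $3$. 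Hence Theorem~\ref{th:th1} produces a symmetric $3\times 3$ doubly stochastic matrix with spectrum $(1,\lambda,\mu)$, and the implication is proved.

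There is really no obstacle here: the hard work has already been done in proving the preceding theorem (which establishes that the four Perfect--Mirsky-type inequalities are necessary for any, not merely symmetric, $3\times 3$ doubly stochastic realization) and in citing Perfect's Theorem~\ref{th:th1} (which asserts these inequalities are also sufficient in the symmetric case). The only subtlety worth mentioning explicitly in the write-up is that the symmetric problem is a priori a restriction of the real problem, so the nontrivial containment runs in the opposite direction from what one might naively expect; the corollary asserts that in dimension $3$ this restriction costs nothing.
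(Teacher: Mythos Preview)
Your proposal is correct and matches the paper's approach exactly: the paper simply states that combining the preceding theorem (necessity of the four inequalities for any $3\times 3$ doubly stochastic matrix with real spectrum) with Theorem~\ref{th:th1} (sufficiency for the symmetric case) yields the corollary. The only minor inaccuracy is your parenthetical mention of $\mathrm{trace}(X^2)$; the preceding theorem's proof uses $\mathrm{trace}(X)$ and $\det(X)$ rather than $\mathrm{trace}(X^2)$, but this does not affect your argument since you are only invoking its conclusion.
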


Now returning to the procedures described at the beginning of
Section2 and starting out with a particular $n\times n$ pattern
$\mathcal{S}$ matrix $V$ which is not orthogonal,
 we obtain the following theorem that solves the (RDIEP) for some restricted cases.

\begin{theorem}
If $1\geq \lambda _{2}\geq ...\geq \lambda _{n}\geq -1$ and
\begin{equation}
\left\{
\begin{array}{c}
1-(n-1)\lambda _{2}+\lambda _{3}+...+\lambda _{n}\geq 0 \\
1+(n-1)\lambda _{n}\geq 0
\end{array}
\right.
\end{equation}
then there is an $n \times n$  nonsymmetric doubly stochastic
matrix $D$ with spectrum $1,\lambda _{2},...,\lambda _{n}.$
\end{theorem}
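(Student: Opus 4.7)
The plan is to apply the procedure described at the start of Section 2 in its non-orthogonal form: I would pick an explicit pattern $\mathcal{S}$ matrix $V \in \mathbf{GL}(n,\mathbf{R})$, define $D := V^{-1}\Lambda V$, and invoke Observation 1.4 to guarantee $D \in \Omega^1(n,\mathbf{R})$. The only remaining task is then to translate nonnegativity of $D$ into the two stated inequalities.

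The specific $V$ I would try is
\[
V = \begin{pmatrix}
1 & 1 & 1 & \cdots & 1 \\
-1 & 1 & 0 & \cdots & 0 \\
-1 & 0 & 1 & \cdots & 0 \\
\vdots & \vdots & & \ddots & \vdots \\
-1 & 0 & 0 & \cdots & 1
\end{pmatrix},
\]
whose first row equals $e_n^T$ and whose remaining rows each sum to zero, so $V$ indeed has pattern $\mathcal{S}$. A quick calculation shows that $V^{-1}$ has first column $\tfrac{1}{n}e_n$, first row $\bigl(\tfrac{1}{n},-\tfrac{1}{n},\ldots,-\tfrac{1}{n}\bigr)$, and principal $(n-1)\times(n-1)$ block (indexed by $2,\ldots,n$) equal to $\tfrac{n-1}{n}$ on the diagonal and $-\tfrac{1}{n}$ off the diagonal.

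Next I would compute $D = V^{-1}\Lambda V$ entrywise. The direct multiplication yields $D_{11} = \mathrm{trace}(\Lambda)/n$; $D_{1j} = (1-\lambda_j)/n$ for $j\geq 2$; $D_{ij} = (1-\lambda_j)/n$ for $i,j \geq 2$ with $i\neq j$; $D_{ii} = (1+(n-1)\lambda_i)/n$ for $i\geq 2$; and $D_{i,1} = \mathrm{trace}(\Lambda)/n - \lambda_i$ for $i\geq 2$. Doubly stochasticity is automatic by Observation 1.4, so the only thing to check is nonnegativity.

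I would then isolate the binding constraints. The entries $(1-\lambda_j)/n$ are $\geq 0$ from the hypothesis $\lambda_j\leq 1$. The diagonal condition $D_{ii}\geq 0$ for $i\geq 2$ is strictest at $\lambda_n$, giving $1+(n-1)\lambda_n\geq 0$, which is the second stated inequality. The first-column condition $D_{i,1}\geq 0$ is strictest at $\lambda_2$, giving $\mathrm{trace}(\Lambda)\geq n\lambda_2$, i.e. $1-(n-1)\lambda_2 + \lambda_3+\cdots+\lambda_n \geq 0$, which is the first stated inequality. The entry $D_{11}=\mathrm{trace}(\Lambda)/n$ is then automatically nonnegative, because the second condition forces $\lambda_i\geq -1/(n-1)$ for every $i\geq 2$, hence $\sum_{i\geq 2}\lambda_i\geq -1$. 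Nonsymmetry is immediate from $D_{12}=(1-\lambda_2)/n$ versus $D_{21}=\mathrm{trace}(\Lambda)/n-\lambda_2$, which coincide only in the degenerate case $\lambda_2=\cdots=\lambda_n$. I do not expect any real obstacle: the only creative step is choosing the right $V$, and after that the proof reduces to direct entrywise bookkeeping.
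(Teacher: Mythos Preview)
Your proposal is correct and is essentially the same argument as the paper's: the paper uses the pattern $\mathcal{S}$ matrix with first row $e_n^T$ and $i$th row $(1,0,\ldots,-1,\ldots,0)$ for $i\geq 2$, which differs from your $V$ only by a sign on rows $2,\ldots,n$, and since that sign matrix commutes with $\Lambda$, the resulting doubly stochastic matrix $V^{-1}\Lambda V$ is literally the same as yours. Your identification of $a_{21}$ and $a_{nn}$ as the binding constraints, and your remark that the second inequality forces $\mathrm{trace}(\Lambda)\geq 0$, match the paper's analysis; your extra sentence on when symmetry can occur (only when $\lambda_2=\cdots=\lambda_n$) is a detail the paper leaves implicit.
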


\begin{proof} Clearly the  matrix
\[
V=\left(
\begin{array}{cccccccc}
1 & 1 & 1 & . & . &.& 1 & 1 \\
1 & -1 & 0 & . & . &.& 0 & 0 \\
1 & 0 & -1 & 0 & . &.& 0 & 0 \\
1 & 0 & 0 & -1 & 0 &.& 0 & 0 \\
\vdots  & \vdots  & \vdots  & \vdots  & \vdots  & \vdots  & \vdots  & \\
1 & 0 & 0 &.& 0& -1 & 0 & 0 \\
1 & 0 & 0 & . & .&.0& -1 & 0 \\
1 & 0 & 0 & . &.& . & 0 & -1
\end{array}
\right)
\] has pattern $\mathcal{S}$ and its inverse $V^{-1}$ is given by:

$$
V^{-1}=\left(
\begin{array}{ccccccc}
\frac{1}{n} & \frac{1}{n}& \frac{1}{n}& \frac{1}{n} & \frac{1}{n}& \frac{1}{n}&\frac{1}{n} \\
\frac{1}{n}& -\frac{(n-1)}{n} & \frac{1}{n}& . & . & .& \frac{1}{n} \\
\frac{1}{n} & \frac{1}{n}& -\frac{(n-1)}{n} & \frac{1}{n} & . & . & \frac{1}{n} \\
\frac{1}{n}& \frac{1}{n} & \frac{1}{n}& -\frac{(n-1)}{n} &\frac{1}{n}& . &  \frac{1}{n} \\
\vdots  & \vdots  & \vdots  & \vdots  & \vdots  & \vdots  & \vdots\\
\frac{1}{n}& \frac{1}{n} & .& \frac{1}{n}& -\frac{(n-1)}{n}& \frac{1}{n}& \frac{1}{n} \\
\frac{1}{n}& . & . & . & \frac{1}{n} & -\frac{(n-1)}{n} & \frac{1}{n} \\
\frac{1}{n}& \frac{1}{n} & \frac{1}{n}& \frac{1}{n} & \frac{1}{n}&
\frac{1}{n} & -\frac{(n-1)}{n}
\end{array}
\right) . $$ Now the entries of the matrix $A=(a_{ij})=V\Lambda
V^{-1}$ satisfy the following relations:

\begin{equation}
\left\{
\begin{array}
[c]{l}
a_{11}=\frac{1}{n}(trace(\Lambda))\\
a_{ii}=\frac{1}{n}(1+(n-1)\lambda_i) \mbox{ for } i=2,...,n \\
a_{i1}=\frac{1}{n}(1+\lambda_2 +...+\lambda_{i-1}-(n-1)\lambda_i+\lambda_{i+1}+...+\lambda_n)\\
a_{ij}=\frac{1}{n}(1-\lambda_j) \mbox { for }  j > 1  \mbox {  and
 }  j\neq i
\end{array}
\right.
\end{equation}
Note that $a_{11}$ and $a_{ij}=\frac{1}{n}(1-\lambda_j) \mbox {
for }  j > 1  \mbox {  and
 }  j\neq i $ are nonnegative since the diagonal entries of
 $\Lambda$ are in the decreasing order. In addition, for $i\neq
 1,$ the entries $a_{i1}$ are increasing so the smallest one is
 $a_{21}$ and $a_{ii}$ are decreasing so the smallest is $a_{nn}.$
 Therefore the matrix $A$ is nonnegative if and only if $a_{21}\geq 0$
 and $a_{nn}\geq 0.$ Finally $A$ is doubly stochastic since $V$
 has the pattern $\mathcal{S},$ and the proof is complete.
\end{proof}

It is should be noted here that the conditions of the above
theorem differ than those of Theorem \ref{th:th2} as the point
$(1,1/2,1/4)$ clearly satisfies the conditions of the above
theorem but obviously does not satisfy the conditions of Theorem
\ref{th:th2} for the case $n=3.$ In addition, the conditions of
the above theorem are also sufficient for the existence of an
$n\times n$ symmetric doubly stochastic matrix $D$ with the
spectrum $(1,\lambda_2,...,\lambda_n).$ To see this, it suffices
to look at the following result which is Corollary 7
in~\cite{h:hw}.
\begin{theorem}~\cite{h:hw} If $\lambda_2,...,\lambda_n\in[-1/(n-1),1]$, then
there exists an $n\times n$ symmetric doubly stochastic matrix $D$
with spectrum $(1,\lambda_2,...,\lambda_n).$
\end{theorem}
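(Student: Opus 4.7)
The plan is to derive this as a direct consequence of inequality (1) already recorded in the paper (due to Perfect). Because the spectrum is an unordered multiset, I may first relabel so that $1\ge\lambda_2\ge\cdots\ge\lambda_n$; the hypothesis then reads $-1/(n-1)\le\lambda_n\le\cdots\le\lambda_2\le 1$, and in particular $\lambda_n\ge -1$, so the ordering hypothesis of the earlier theorem is met. It therefore suffices to verify that (1) is automatically satisfied whenever every $\lambda_i$ lies in $[-1/(n-1),1]$, and to invoke that theorem.

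To verify (1), note that its left-hand side is affine in $\lambda_2,\ldots,\lambda_n$ with strictly positive coefficients, so over the box $[-1/(n-1),1]^{n-1}$ it is minimised at the corner where $\lambda_k=-1/(n-1)$ for every $k$. At that corner the value collapses, via the index substitution $j=n-k+1$ and the telescoping identity $\sum_{j=1}^{n-1}(1/j-1/(j+1))=1-1/n$, to
$$\frac{1}{n}-\frac{1}{n-1}\sum_{j=1}^{n-1}\frac{1}{j(j+1)}=\frac{1}{n}-\frac{1}{n-1}\cdot\frac{n-1}{n}=0.$$
Hence (1) holds (with equality exactly at this worst corner), and Perfect's theorem produces a symmetric doubly stochastic matrix $D$ with prescribed eigenvalues $1,\lambda_2,\ldots,\lambda_n$.

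There is essentially no obstacle here: the entire content of the argument is the observation that the uniform lower bound $-1/(n-1)$ in Hwang's hypothesis is precisely the threshold at which Perfect's linear functional touches zero, after which the conclusion is immediate from the already-stated theorem. If one preferred a self-contained argument, one could instead construct $D=J_n+\sum_{i=2}^n\lambda_i x_ix_i^T$ using the Soules orthonormal columns $x_2,\ldots,x_n$ of $V_n$ displayed above, and verify entrywise nonnegativity via the same telescoping bound (the $i$-th diagonal entry of $D$ is the convex sum $\tfrac{1}{n}+\sum_{k=i+1}^n\tfrac{\lambda_{n-k+2}}{(k-1)k}+\tfrac{i-1}{i}\lambda_{n-i+2}$, whose minimum over the Hwang box is again $0$); but reducing to Perfect's theorem is considerably shorter and avoids duplicating the Soules computation.
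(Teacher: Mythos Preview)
Your argument is correct: after reordering so that $1\ge\lambda_2\ge\cdots\ge\lambda_n$, the hypothesis $\lambda_i\ge -1/(n-1)$ forces the Perfect--Soules functional in (1) to be nonnegative, since its minimum over the box is attained at the all-$(-1/(n-1))$ corner and the telescoping sum $\sum_{j=1}^{n-1}\frac{1}{j(j+1)}=\frac{n-1}{n}$ gives the value $0$ there. Invoking Theorem~2.2 then yields the desired symmetric doubly stochastic realization.

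As for comparison with the paper: there is nothing to compare against. The paper does not prove this statement; it merely quotes it as Corollary~7 of Hwang--Pyo \cite{h:hw} in order to observe that the region described in Theorem~3.4 also lies inside $\Theta_n^s$. Your reduction to Perfect's theorem is a legitimate (and short) proof, and has the pleasant feature that it uses only material already displayed in the paper. The alternative you sketch---building $D=J_n+\sum_{i\ge2}\lambda_i x_ix_i^T$ directly from the Soules columns and checking the diagonal entries---is essentially how Hwang and Pyo argue in \cite{h:hw}, so your primary route via (1) is in fact the more economical of the two.
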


 Here it is worth mentioning that at this stage and with
  extensive numerical computations, we are not
 able to find a list of $n$ $(\geq 4)$ real numbers which shows the two
 problems (RDIEP) and (SDIEP) are different. In conclusion, the
 question whether (RDIEP) and (SDIEP) are equivalent or not, for
 $n\geq 4$ remains an open problem.

We conclude this section by using the version of Algorithm 1
mentioned in Remark 2.5. More explicitly, we choose a nonsymmetric
doubly stochastic matrix with real eigenvalues such as the
$3\times 3$ matrix $B$ of Example 1. Using Maple, the eigenvectors
of $B$ are given by $(1,1,1)^T,$  $(1,-2,1)^T$ and $(1,1,-2)^T.$
Now the corresponding pattern $\mathcal{S}$ matrix is given by
$P=\left(
\begin{array}{ccc}
1& 1 & 1  \\
1 & -2 & 1  \\
1 & 1 & -2 \\
\end{array}
\right). $ Then its inverse is given by $P^{-1}=\left(
\begin{array}{ccc}
1/3& 1/3 & 1/3 \\
1/3 & -1/3 & 0 \\
1/3 & 0 & -1/3 \\
\end{array}
\right) $ and then $P\Lambda_3 P^{-1}=\left(
\begin{array}{ccc}
\frac{1+\lambda_2+\lambda_3}{3}& \frac{1-\lambda_2}{3} & \frac{1-\lambda_3}{3}  \\
\frac{1-2\lambda_2+\lambda_3}{3} & \frac{1+2\lambda_2}{3} & \frac{1-\lambda_3}{3}  \\
\frac{1+\lambda_2-2\lambda_3}{3} & \frac{1-\lambda_2}{3} & \frac{1+2\lambda_3}{3} \\
\end{array}
\right). $ Then the conditions for which $P\Lambda_3 P^{-1}$ is
nonnegative is given by Theorem 3.4 for the case $n=3$ with no
surprise as $P$ is a multiple of $V^{-1}$ which is used in the
proof of Theorem 3.4. Following Step4 of Algorithm 1, we construct
the matrix $W_n$ which is given by:
$$ W_{n}=\left(
\begin{array}{cccccccc}
\frac{1}{\sqrt{n}} & \frac{1}{\sqrt{n(n-1)}} &
\frac{1}{\sqrt{(n-1)(n-2)}} &
. & . & \frac{1}{\sqrt{12}} & 1 & 1\\
\frac{1}{\sqrt{n}} & \frac{1}{\sqrt{n(n-1)}} &
\frac{1}{\sqrt{(n-1)(n-2)}} &
. & . & \frac{1}{\sqrt{12}} & -2 & 1 \\
\frac{1}{\sqrt{n}} & \frac{1}{\sqrt{n(n-1)}} &
\frac{1}{\sqrt{(n-1)(n-2)}} &
. & . & \frac{1}{\sqrt{12}} & 1 & -2 \\
\frac{1}{\sqrt{n}} & \frac{1}{\sqrt{n(n-1)}} &
\frac{1}{\sqrt{(n-1)(n-2)}} &
. & . & \frac{-3}{\sqrt{12}}& 0 & 0 \\
\vdots & \vdots & \vdots & \vdots & \vdots & \vdots & \vdots &\vdots \\
\frac{1}{\sqrt{n}} & \frac{1}{\sqrt{n(n-1)}} &
\frac{1}{\sqrt{(n-1)(n-2)}} &
. & . & 0 & 0 & 0\\
\frac{1}{\sqrt{n}} & \frac{1}{\sqrt{n(n-1)}} &
-\frac{n-2}{\sqrt{(n-1)(n-2)}}
& . & . & 0 & 0 & 0 \\
\frac{1}{\sqrt{n}} & -\frac{n-1}{\sqrt{n(n-1)}} & 0 & . & . & 0 & 0 & 0 \\
\end{array}
\right).
$$
A virtually identical description as in the symmetric case shows
that $A=W_n\Lambda W_n^{-1}= \left(
\begin{array}{cc}
X & C  \\
C^T & D
\end{array}
\right)  $ and $B=V_n\Lambda V_n^{-1}= \left(
\begin{array}{cc}
Y & C  \\
C^T & D
\end{array}
\right)  $  where $X$ and $Y$ are $3\times 3$ matrices. Thus a
simple check shows that the conditions for which $A=W_n\Lambda
W_n^{-1}$ is nonnegative give the following theorem.
\begin{theorem}
Let $1\geq \lambda _{2}\geq ...\geq \lambda _{n}\geq -1,$ such
that $ 1+ \lambda _{2}+ ...+ \lambda _{n}\geq 0 .$ If
$$
\left \{
\begin{array}[c]{l}
 \frac{1}{n}+\frac{\lambda_{2}}{n(n-1)}+\frac{\lambda_{3}}{
(n-1)(n-2)}+\frac{\lambda_{4}}{(n-2)(n-3)}+...+\frac{\lambda_{n-2}}{(4)(3)}-\frac{2\lambda_{n-1}}{3}+\frac{\lambda_{n}}{3} \geq0 \\
\frac{1}{n}+\frac{\lambda_{2}}{n(n-1)}+\frac{\lambda_{3}}{
(n-1)(n-2)}+\frac{\lambda_{4}}{(n-2)(n-3)}+...+\frac{\lambda_{n-2}}{(4)(3)}+\frac{2\lambda_{n}}{3} \geq0 \\
\end{array}
\right .
$$
then $(1,\lambda_2, ..., \lambda_n)$ is the spectrum of an
$n\times n$ nonsymmetric doubly stochastic matrix.
\end{theorem}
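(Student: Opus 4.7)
My plan is to follow Step 4 of Algorithm 1 applied to the $3\times 3$ doubly stochastic matrix $B$ of Example 1, mirroring the symmetric argument given for Theorem 2.5 (the $k=4$ case) but now with $k=3$. I would first explicitly form the $n\times n$ matrix $W_n$ displayed immediately before the theorem, and verify that it has pattern $\mathcal{S}$: the first column is a scalar multiple of $e_n$, and each of the remaining columns sums to zero (the Soules columns contribute zero by construction, and the last two columns come from the eigenvectors $(1,-2,1)^T$ and $(1,1,-2)^T$ of $B$, padded by zeros). By Observation 1.4 this guarantees that the similarity $A=W_n\Lambda W_n^{-1}$ automatically lies in $\Omega^{1}(n,\mathbf{R})$, so only nonnegativity needs to be checked. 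Since $A$ is similar to $\Lambda$, its spectrum is $(1,\lambda_2,\ldots,\lambda_n)$.

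The key structural observation I would use is the block identity already indicated in the excerpt: if $B_0=V_n\Lambda V_n^{-1}$ is the Soules matrix built from the full Soules basis, then $A$ and $B_0$ agree outside the top-left $3\times 3$ principal block, i.e.\ $A=\left(\begin{smallmatrix}X & C\\ C^T & D\end{smallmatrix}\right)$ and $B_0=\left(\begin{smallmatrix}Y & C\\ C^T & D\end{smallmatrix}\right)$ for suitable $X,Y$ of size $3\times 3$. This is because replacing the last two Soules columns by the vectors supported on the first three rows only modifies the first three rows of $W_n$, and the symmetric structure in rows $4,\ldots,n$ then forces $C$ and $D$ to coincide with the Soules values. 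Consequently, nonnegativity of $C$ and $D$ is inherited directly from~\cite{s:sou}, and the problem reduces to controlling the $3\times 3$ block $X$.

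Next I would compute the entries of $X$ by direct matrix multiplication. Writing $S_n=\frac{1}{n}+\frac{\lambda_2}{n(n-1)}+\frac{\lambda_3}{(n-1)(n-2)}+\cdots+\frac{\lambda_{n-2}}{4\cdot 3}$ for the common Soules-tail contribution from the first $n-2$ columns, the last two columns of $W_n$ contribute only $\lambda_{n-1}$ and $\lambda_n$ terms weighted by products of the entries $\{1,-2,1\}$ and $\{1,1,-2\}$. A short bookkeeping computation then expresses each entry of $X$ as $S_n$ plus an explicit rational combination of $\lambda_{n-1}$ and $\lambda_n$. Among these nine entries, the ordering hypothesis $\lambda_2\ge\cdots\ge\lambda_n$ together with $\mathrm{trace}(\Lambda)\ge 0$ will make most of them automatically nonnegative, and the two entries whose nonnegativity becomes the binding constraint produce precisely the two displayed inequalities of the theorem (the first one, containing the $-\frac{2\lambda_{n-1}}{3}$ term, reflects an off-diagonal entry that picks up the coefficient $-2$ from the eigenvector $(1,-2,1)^T$).

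The last thing to verify is that the remaining diagonal entries $a_{44},\ldots,a_{nn}$ coming from the pure Soules part are dominated by these two critical expressions. This should follow from exactly the same monotonicity argument used after equation (11) in the symmetric case: the Soules tail $a_{ii}$ is nondecreasing in $i$, and the smallest such entry can be rewritten by repeatedly splitting its last coefficient into equal thirds, showing it is at least as large as both of the stated lower bounds. I expect this monotonicity/rewriting step to be the main obstacle, since it requires identifying the correct regrouping of the Soules coefficients into factors of $\tfrac{1}{3}$ analogous to the factors of $\tfrac{1}{4}$ used in Theorem 2.5; once that is in place, combining Observation 1.4 with the nonnegativity of $X$, $C$, $D$ completes the proof.
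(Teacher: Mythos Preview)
Your proposal is correct and follows precisely the paper's own argument: the paper constructs the same matrix $W_n$ (this is Step~3 of Algorithm~1, not Step~4), observes the block identity $A=W_n\Lambda W_n^{-1}=\left(\begin{smallmatrix}X & C\\ C^T & D\end{smallmatrix}\right)$ versus $V_n\Lambda V_n^{-1}=\left(\begin{smallmatrix}Y & C\\ C^T & D\end{smallmatrix}\right)$, and then reduces nonnegativity to the $3\times 3$ block $X$, where the two binding entries are exactly $X_{21}=S_n-\tfrac{2\lambda_{n-1}}{3}+\tfrac{\lambda_n}{3}$ and $X_{33}=S_n+\tfrac{2\lambda_n}{3}$. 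Your anticipated ``main obstacle'' is in fact a one-line check, since $a_{44}=S_n+\tfrac{2}{3}\lambda_{n-2}\geq S_n+\tfrac{2}{3}\lambda_n\geq 0$ by the second hypothesis, so the Soules diagonal entries cause no trouble.
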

Next, we use the improvement process of Step4 to obtain new
results. This can be illustrated by the following example for the
case $n=6.$
\paragraph{\bf{Example 3}} we apply the improvement process
described in Step4 of Algorithm 1  to obtain the matrix
$$ W_\beta=\left(
\begin{array}{ccccccc}
\frac{1}{\sqrt{6}} & \frac{1}{\sqrt{6}} &
0 & 0 & 1 & 1 \\
\frac{1}{\sqrt{6}} & \frac{1}{\sqrt{6}}  &
0 & 0 & -2 & 1 \\
\frac{1}{\sqrt{6}} & \frac{1}{\sqrt{6}} &
0 &  0 & 1 & -2 \\
\frac{1}{\sqrt{6}} & -\frac{1}{\sqrt{6}} &
 1 & 1 & 0 & 0\\
 \frac{1}{\sqrt{6}} & -\frac{1}{\sqrt{6}}  &
 -2 & 1 & 0 & 0 \\
 \frac{1}{\sqrt{6}} & -\frac{1}{\sqrt{6}} &
 1 & -2 & 0 & 0\\
\end{array}
\right) .$$  Now an inspection shows that $W_\beta\Lambda_6
W_\beta^{-1}= \left(
\begin{array}{cc}
B & C  \\
C & D
\end{array}
\right)  $ where
$$B=   \left(
\begin{array}{ccc}
\frac{1}{6}+\frac{\lambda_2}{6}+\frac{\lambda_5}{3}+\frac{\lambda_6}{3}&
\frac{1}{6}+\frac{\lambda_2}{6}-\frac{\lambda_5}{3}&
\frac{1}{6}+\frac{\lambda_2}{6}-\frac{\lambda_6}{3}  \\
 \frac{1}{6}+\frac{\lambda_2}{6}-\frac{2\lambda_5}{3}+\frac{\lambda_6}{3} & \frac{1}{6}+\frac{\lambda_2}{6}+\frac{2\lambda_5}{3} &
\frac{1}{6}+\frac{\lambda_2}{6}-\frac{\lambda_6}{3}  \\
\frac{1}{6}+\frac{\lambda_2}{6}+\frac{\lambda_5}{3}-\frac{2\lambda_6}{3}
& \frac{1}{6}+\frac{\lambda_2}{6}-\frac{\lambda_5}{3} &
\frac{1}{6}+\frac{\lambda_2}{6}+\frac{2\lambda_6}{3}
\end{array}
\right)  ,   C=   \left(
\begin{array}{ccc}
  \frac{1}{6}-\frac{\lambda_2}{6} & \frac{1}{6}-\frac{\lambda_2}{6} & \frac{1}{6}-\frac{\lambda_2}{6} \\
  \frac{1}{6}-\frac{\lambda_2}{6} & \frac{1}{6}-\frac{\lambda_2}{6} & \frac{1}{6}-\frac{\lambda_2}{6} \\
\frac{1}{6}-\frac{\lambda_2}{6} & \frac{1}{6}-\frac{\lambda_2}{6}
& \frac{1}{6}-\frac{\lambda_2}{6}
\end{array}
\right)    $$
 and $D= \left(
\begin{array}{ccc}
\frac{1}{6}+\frac{\lambda_2}{6}+\frac{\lambda_3}{3}+\frac{\lambda_4}{3}&
\frac{1}{6}+\frac{\lambda_2}{6}-\frac{\lambda_3}{3}&
\frac{1}{6}+\frac{\lambda_2}{6}-\frac{\lambda_4}{3}  \\
 \frac{1}{6}+\frac{\lambda_2}{6}-\frac{2\lambda_3}{3}+\frac{\lambda_4}{3} & \frac{1}{6}+\frac{\lambda_2}{6}+\frac{2\lambda_3}{3} &
\frac{1}{6}+\frac{\lambda_2}{6}-\frac{\lambda_4}{3}  \\
\frac{1}{6}+\frac{\lambda_2}{6}+\frac{\lambda_3}{3}-\frac{2\lambda_4}{3}
& \frac{1}{6}+\frac{\lambda_2}{6}-\frac{\lambda_3}{3} &
\frac{1}{6}+\frac{\lambda_2}{6}+\frac{2\lambda_4}{3}
\end{array}
\right)  .$
 Note that for $1\geq \lambda _{2}\geq ...\geq \lambda _{6}\geq -1,$ we have $\frac{1}{6}+\frac{\lambda_2}{6}+\frac{2\lambda_4}{3}\geq
 \frac{1}{6}+\frac{\lambda_2}{6}+\frac{2\lambda_6}{3}.$
    Thus the conditions for which the matrix $W_\beta\Lambda_6
W_\beta^{-1}$ is nonnegative result in the following conclusion.

\begin{theorem}
Let $1\geq \lambda _{2}\geq ...\geq \lambda _{6}\geq -1,$ such
that $ 1+ \lambda _{2}+ ...+ \lambda _{6} .$ If
$$
\left \{
\begin{array}[c]{l}
\frac{1}{6}+\frac{\lambda_2}{6}+\frac{2\lambda_6}{3} \geq 0\\
 \frac{1}{6}+\frac{\lambda_2}{6}-\frac{2\lambda_3}{3}+\frac{\lambda_4}{3} \geq0 \\
\frac{1}{6}+\frac{\lambda_2}{6}-\frac{2\lambda_5}{3}+\frac{\lambda_6}{3}\geq0 \\
\end{array}
\right .
$$
then $(1,\lambda_2, ..., \lambda_6)$ is the spectrum of a $6\times
6$ nonsymmetric doubly stochastic matrix
\end{theorem}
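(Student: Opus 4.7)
The plan is to verify that the matrix $A := W_\beta \Lambda_6 W_\beta^{-1}$ explicitly computed in Example~3 is a $6\times 6$ doubly stochastic matrix with spectrum $\{1, \lambda_2, \ldots, \lambda_6\}$, under the three displayed hypotheses combined with the ordering $1 \geq \lambda_2 \geq \cdots \geq \lambda_6 \geq -1$. Once this is shown, $A$ is the desired matrix and the theorem follows.

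First I would establish that $A$ is generalized doubly stochastic with row and column sums equal to $1$. Direct inspection of $W_\beta$ shows that its first column is $\frac{1}{\sqrt{6}} e_6$ and each of its remaining five column sums vanishes; by the duality discussion preceding Observation~1.5 this is precisely the statement that $W_\beta^{-1}$ has pattern $\mathcal{S}$. Applying Observation~1.5 with $V = W_\beta^{-1}$ and $r=1$ then yields $A = W_\beta \Lambda_6 W_\beta^{-1} = V^{-1} \Lambda_6 V \in \Omega^1(6, \mathbf{R})$, and the spectrum of $A$ is $\{1, \lambda_2, \ldots, \lambda_6\}$ by similarity with $\Lambda_6$.

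The core of the proof is the verification that $A$ is entrywise nonnegative under the three given hypotheses. Example~3 already displays $A$ as a $2\times 2$ block matrix with diagonal blocks $B$ and $D$ and with both off-diagonal blocks equal to a common $3\times 3$ matrix $C$, so the task splits into three subcases. The block $C$ has all nine entries equal to $\frac{1-\lambda_2}{6}$, which is nonnegative because $\lambda_2 \leq 1$. For the blocks $B$ and $D$, the strategy is to show that each entry majorizes one of three ``critical'' entries $B_{33}$, $B_{21}$, $D_{21}$, which are nonnegative exactly by the three given hypotheses. The bookkeeping for $B$ uses only monotonicity in $\lambda_5$ and $\lambda_6$: the differences $B_{11} - B_{33} = \frac{\lambda_5 - \lambda_6}{3}$ and $B_{22} - B_{33} = \frac{2(\lambda_5 - \lambda_6)}{3}$ are nonnegative; the entries $B_{12}=B_{32}$ and $B_{13}=B_{23}$ are each at least $\frac{1-\lambda_2}{6} \geq 0$ since $\lambda_5, \lambda_6 \leq \lambda_2 \leq 1$; and $B_{31}-B_{21} = \lambda_5 - \lambda_6 \geq 0$. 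The analysis of $D$ is identical after replacing the pair $(\lambda_5, \lambda_6)$ by $(\lambda_3, \lambda_4)$, together with the single extra estimate $D_{33} - B_{33} = \frac{2(\lambda_4 - \lambda_6)}{3} \geq 0$ already recorded in Example~3, which propagates the first hypothesis to the three diagonal entries of $D$.

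No serious obstacle arises. The proof is essentially a bookkeeping exercise in the eighteen entries of $B$ and $D$; once the three critical entries have been isolated, the remaining fifteen reduce to one-line monotonicity checks using only the $\lambda_i$-ordering. The identification of the critical entries is itself essentially forced by the structure of $B$ and $D$ exhibited in Example~3.
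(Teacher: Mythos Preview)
Your proposal is correct and follows essentially the same approach as the paper: the paper's argument consists precisely of the explicit computation of $W_\beta \Lambda_6 W_\beta^{-1}$ in Example~3 together with the single observation $\frac{1}{6}+\frac{\lambda_2}{6}+\frac{2\lambda_4}{3}\ge\frac{1}{6}+\frac{\lambda_2}{6}+\frac{2\lambda_6}{3}$, from which the three displayed inequalities are read off as the nonnegativity conditions. You have simply made explicit the entry-by-entry bookkeeping (identifying $B_{33}$, $B_{21}$, $D_{21}$ as the critical entries and reducing the rest to monotonicity in the $\lambda_i$) that the paper leaves to the reader; the only point you omit is the one-line remark that $A$ is indeed nonsymmetric (e.g.\ $B_{21}\neq B_{12}$ unless $\lambda_5=\lambda_6$), which matches the word ``nonsymmetric'' in the statement.
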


\section{Constructing doubly stochastic matrices with complex spectrum}

We start the section by again mentioning that only the case $n=3$
is completely solved in~\cite{p:per} where the following theorem
has been proved.
\begin{theorem}~\cite{p:per}
Let $z$ be a complex number with nonzero imaginary part and
$\bar{z}$ be its complex conjugate. Then $(1,z,\bar{z})$ is the
spectrum of a $3\times 3$ doubly stochastic matrix if and only if
$z$ is in the convex hull of the three cubic roots of unity
\end{theorem}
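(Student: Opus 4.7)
The plan is to prove both implications separately.

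\textbf{Sufficiency.} Let $C$ denote the cyclic $3\times 3$ permutation matrix and $\omega = e^{2\pi i/3}$. The three matrices $I, C, C^2$ are doubly stochastic and mutually commute; they are simultaneously diagonalized by the discrete Fourier basis $\{e_3, (1,\omega,\omega^2)^T, (1,\omega^2,\omega)^T\}$, on which they act with eigenvalues $(1,1,1)$, $(1,\omega,\omega^2)$, and $(1,\omega^2,\omega)$ respectively. Any $z$ in the convex hull of $\{1,\omega,\omega^2\}$ admits a unique representation $z = t_1 + t_2\omega + t_3\omega^2$ with $t_i \geq 0$ and $\sum_i t_i = 1$, since these three vertices are affinely independent. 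The matrix $A := t_1 I + t_2 C + t_3 C^2$ is then a convex combination of doubly stochastic matrices, hence doubly stochastic itself, and reading off its action on the common eigenbasis shows that its spectrum is $(1, z, \bar z)$.

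\textbf{Necessity (setup).} Given $A \in \Delta_3$ with spectrum $(1, z, \bar z)$, write $z = a + ib$. The triangle $\mathrm{conv}(1,\omega,\omega^2)$ is cut out by $a \geq -1/2$ and $a \pm \sqrt{3}\, b \leq 1$. The first inequality is immediate, since $\mathrm{tr}(A) = 1 + 2a \geq 0$. The remaining pair is jointly equivalent to $3 b^2 \leq (1 - a)^2$, and I would establish this via Observation~1.1. Biorthogonality of the left and right eigenvectors of $A$ forces the real $2$-dimensional invariant subspace of $A$ attached to $(z, \bar z)$ to coincide with $e_3^\perp$, so for any orthonormal basis $(f_1, f_2)$ of $e_3^\perp$ one has $A = J_3 + \sum_{p,q=1}^2 M_{pq}\, f_p f_q^T$, where $M$ is the matrix of $A|_{e_3^\perp}$ in this basis. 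Rotate the basis within $e_3^\perp$ so that $M$ has equal diagonal $m_{11} = m_{22} = a$ (possible because $\mathrm{tr}(M) = 2a$), and then fix signs so that $m_{12} = \mu > 0$ and $m_{21} = -\nu < 0$ with $\mu\nu = b^2$ (possible because $m_{12} m_{21} = -b^2 < 0$ whenever $b \neq 0$).

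\textbf{Necessity (key calculation).} Parameterize orthonormal bases of $e_3^\perp$ via the equilateral formulas $f_{1,k} = \sqrt{2/3}\,\cos(\phi + 2\pi(k-1)/3)$ and $f_{2,k} = \sqrt{2/3}\,\sin(\phi + 2\pi(k-1)/3)$ for $k=1,2,3$, with $\phi$ determined by $A$. A direct expansion of $A_{ij} = 1/3 + a(f_{1,i}f_{1,j} + f_{2,i}f_{2,j}) + \mu f_{1,i}f_{2,j} - \nu f_{2,i}f_{1,j}$ shows that the three entries $A_{21}$, $A_{13}$, $A_{32}$ take the common symmetric shape
\[
\frac{1-a}{3} + \frac{\mu-\nu}{3}\sin\psi_k - \frac{\mu+\nu}{2\sqrt{3}},
\]
where $\psi_k := 2\phi + 2\pi k/3$ runs over $k=1,2,0$ respectively. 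Imposing nonnegativity on each of these and summing over $k$ eliminates the $\phi$-dependent term via the identity $\sin\psi_0 + \sin\psi_1 + \sin\psi_2 = 0$, yielding $\frac{\sqrt{3}}{2}(\mu+\nu) \leq 1-a$. Combined with the AM--GM bound $\mu + \nu \geq 2\sqrt{\mu\nu} = 2|b|$, this produces $\sqrt{3}\,|b| \leq 1-a$, equivalent to $3 b^2 \leq (1-a)^2$, as required. The boundary case $b = 0$ is trivial. The main delicate step will be verifying that the three specific off-diagonal entries selected above genuinely share the claimed symmetric form in the canonical basis; once that is secured, the vanishing sum $\sum_k \sin\psi_k = 0$ supplies the whole bound without further algebraic bookkeeping.
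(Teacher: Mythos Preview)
The paper does not supply a proof of this theorem; it is quoted from Perfect and Mirsky and used only as background for Section~4. The closest thing to a proof in the paper is Example~4, which carries out essentially your sufficiency argument: conjugating $\mathrm{diag}(1,z,\bar z)$ by the Fourier matrix produces a circulant with entries $\frac{1}{3}(1+2a)$, $\frac{1}{3}(1-a-b\sqrt{3})$, $\frac{1}{3}(1-a+b\sqrt{3})$, nonnegative precisely on the closed triangle. So on the sufficiency half your write-up and the paper coincide.

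Your necessity argument is not in the paper at all, and it is correct in outline, but one point deserves care. After you rotate $(f_1,f_2)$ to equalise the diagonal of $M$ and then flip a sign to force $m_{12}>0>m_{21}$, you no longer control whether the resulting basis is positively oriented relative to the parametrisation $\theta_k=\phi+2\pi(k-1)/3$. What is always true is that the three angles $\alpha_k$ determined by $(f_{1,k},f_{2,k})=\sqrt{2/3}\,(\cos\alpha_k,\sin\alpha_k)$ differ pairwise by $\pm 2\pi/3$; hence exactly three of the six off-diagonal entries $A_{ij}$ carry the term $-\frac{\mu+\nu}{2\sqrt{3}}$ (those with $\alpha_i-\alpha_j\equiv 2\pi/3$), and for that triple the sums $\alpha_i+\alpha_j$ run over $2\phi,\,2\phi\pm 2\pi/3$, so the sines still cancel. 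Your summation trick therefore works, but the relevant triple may be $\{A_{12},A_{31},A_{23}\}$ rather than $\{A_{21},A_{13},A_{32}\}$, depending on orientation; you should phrase the selection accordingly. Once that is said, the AM--GM step $\mu+\nu\ge 2\sqrt{\mu\nu}=2|b|$ finishes the bound $\sqrt{3}\,|b|\le 1-a$ cleanly.
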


 To take advantage of Algorithm 1 for this case, we recall the
famous Birkoff's theorem that states that $\Delta_n$ is a convex
polytope of dimension $(n-1)^2$ where its vertices are the
$n\times n$ permutation matrices. Next, we describe how to
manipulate Algorithm 1 to obtain solutions for (DIEP) with complex
spectrum but with minor changes namely  in Step1 we start out with
a nonsymmetric vertex of $\Delta_k.$ In fact, we can begin
Algorithm 1 with any $k\times k$ doubly stochastic matrix $X$ with
complex spectrum but in this case the orthonormalization process
in Step1 should be dropped if the columns of $X$ are not
orthogonal. We illustrate this idea by considering the following
example.

\paragraph{\bf{Example 4}} The vertex $P= \left(
\begin{array}{ccc}
0& 1 & 0  \\
0 & 0 & 1  \\
1 & 0 & 0 \\
\end{array}
\right) $  of $\Delta_3$  has the following three eigenvectors
$\frac{1}{\sqrt{3}}e_3,$ $x_2=\frac{1}{\sqrt{3}}(1,w^2,w)^T$ and
$x_3=\frac{1}{\sqrt{3}}(1,w,w^2)^T$ where $w=-1/2+I\sqrt{3}/2$ is
the primitive cubic root of unity. Then the $3\times 3$ complex
pattern $\mathcal{S}$ matrix obtained from these eigenvectors is
given by $X= \frac{1}{\sqrt{3}}\left(
\begin{array}{ccc}
1& 1 & 1  \\
1 & w^2 & w  \\
1 & w & w^2 \\
\end{array}
\right) $ and its inverse is $X^{-1}=\frac{1}{\sqrt{3}}\left(
\begin{array}{ccc}
1& 1 & 1  \\
1 & w & w^2  \\
1 & w^2 & w \\
\end{array}
\right) .$ If we let $\Pi_3=\left(
\begin{array}{ccc}
1& 0 & 0  \\
0 & a+Ib & 0  \\
 0 & 0 & a-Ib \\
\end{array}
\right) $ with $trace(\Pi_3)=1+2a\geq 0$ and $|a+Ib|\leq 1,$ then
a simple check shows that
$$X\Pi_3 X^{-1}=\frac{1}{3}\left(
\begin{array}{ccc}
1+2a& 1 -a-b\sqrt{3}& 1 -a+b\sqrt{3} \\
1 -a+b\sqrt{3} & 1+2a & 1 -a-b\sqrt{3}  \\
1 -a-b\sqrt{3} & 1 -a+b\sqrt{3} & 1+2a \\
\end{array}
\right) .$$  Therefore the conditions for which $X\Pi_3 X^{-1}$ is
nonnegative are simply giving by:
\begin{equation}
\left\{
\begin{array}
[c]{l}
1-a +b\sqrt{3} \geq 0\\
1-a -b\sqrt{3}\geq 0
\end{array}
\right.
\end{equation}

The next step of our algorithm is to form the following $n\times
n$ complex pattern $\mathcal{S}$ matrix
$$
W_{n}=\left(
\begin{array}{cccccccc}
\frac{1}{\sqrt{n}} & \frac{1}{\sqrt{n(n-1)}} &
\frac{1}{\sqrt{(n-1)(n-2)}} &
. & . & \frac{1}{\sqrt{12}} & \frac{1}{\sqrt{3}} & \frac{1}{\sqrt{3}}\\
\frac{1}{\sqrt{n}} & \frac{1}{\sqrt{n(n-1)}} &
\frac{1}{\sqrt{(n-1)(n-2)}} &
. & . & \frac{1}{\sqrt{12}} & \frac{w^2}{\sqrt{3}}& \frac{w}{\sqrt{3}} \\
\frac{1}{\sqrt{n}} & \frac{1}{\sqrt{n(n-1)}} &
\frac{1}{\sqrt{(n-1)(n-2)}} &
. & . & \frac{1}{\sqrt{12}} & \frac{w}{\sqrt{3}}& \frac{w^2}{\sqrt{3}} \\
\frac{1}{\sqrt{n}} & \frac{1}{\sqrt{n(n-1)}} &
\frac{1}{\sqrt{(n-1)(n-2)}} &
. & . & \frac{-3}{\sqrt{12}}& 0 & 0 \\
\vdots & \vdots & \vdots & \vdots & \vdots & \vdots & \vdots &\vdots \\
\frac{1}{\sqrt{n}} & \frac{1}{\sqrt{n(n-1)}} &
\frac{1}{\sqrt{(n-1)(n-2)}} &
. & . & 0 & 0 & 0\\
\frac{1}{\sqrt{n}} & \frac{1}{\sqrt{n(n-1)}} &
-\frac{n-2}{\sqrt{(n-1)(n-2)}}
& . & . & 0 & 0 & 0 \\
\frac{1}{\sqrt{n}} & -\frac{n-1}{\sqrt{n(n-1)}} & 0 & . & . & 0 & 0 & 0 \\
\end{array}
\right)
$$
Now if we let $\Pi_n$ be the diagonal matrix
$1\oplus\lambda_2\oplus ... \oplus \lambda_{n-2}\oplus a+Ib\oplus
a-Ib,$ such that $|a+Ib|\leq 1$ and with $1\geq \lambda _{2}\geq
...\geq \lambda _{n-2}\geq -1$ and $ trace(\Pi_n)= 1+ \lambda
_{2}+ ...+ \lambda _{n-2}+2a\geq 0.$ Then an inspection shows that
$W_n\Pi_n W_n^{-1}=\left(
\begin{array}{cc}
A& B   \\
B^T & C
\end{array}
\right) $ where $A$ is a $3\times 3$ circulant matrix whose first
row $(a_{11},a_{12},a_{13})$ is given by:
$$
\left \{
\begin{array}[c]{l}
a_{11}= \frac{1}{n}+\frac{\lambda_{2}}{n(n-1)}+\frac{\lambda_{3}}{
(n-1)(n-2)}+\frac{\lambda_{4}}{(n-2)(n-3)}+...+\frac{\lambda_{n-2}}{(4)(3)}+\frac{2a}{3} \\
a_{12}=\frac{1}{n}+\frac{\lambda_{2}}{n(n-1)}+\frac{\lambda_{3}}{
(n-1)(n-2)}+\frac{\lambda_{4}}{(n-2)(n-3)}+...+\frac{\lambda_{n-2}}{(4)(3)}-\frac{a}{3}-\frac{b\sqrt{3}}{3} \\
a_{13}=\frac{1}{n}+\frac{\lambda_{2}}{n(n-1)}+\frac{\lambda_{3}}{
(n-1)(n-2)}+\frac{\lambda_{4}}{(n-2)(n-3)}+...+\frac{\lambda_{n-2}}{(4)(3)}-\frac{a}{3}+\frac{b\sqrt{3}}{3}\\
\end{array}
\right .
$$ and $B$ is nonnegative and $C$ has nonnegative off-diagonal
entries and its diagonal entries are increasing so that the
smallest is
 $$c_{11}=\frac{1}{n}+\frac{\lambda_{2}}{n(n-1)}+\frac{\lambda_{3}}{
(n-1)(n-2)}+\frac{\lambda_{4}}{(n-2)(n-3)}+...+\frac{\lambda_{n-3}}{(5)(4)}+\frac{3\lambda_{n-2}}{4}.$$
 Thus the conditions for which $W_n\Pi_n W_n^{-1}$
is nonnegative gives the following theorem.
\begin{theorem}
Let $1\geq \lambda _{2}\geq ...\geq \lambda _{n-2}\geq -1,$ such
that $ 1+ \lambda _{2}+ ...+ \lambda _{n-2}+2a\geq 0 $ and
$|a+Ib|\leq 1.$ If
$$
\left \{
\begin{array}[c]{l}
 \frac{1}{n}+\frac{\lambda_{2}}{n(n-1)}+\frac{\lambda_{3}}{
(n-1)(n-2)}+\frac{\lambda_{4}}{(n-2)(n-3)}+...+\frac{\lambda_{n-2}}{(4)(3)}+\frac{2a}{3} \geq0 \\
\frac{1}{n}+\frac{\lambda_{2}}{n(n-1)}+\frac{\lambda_{3}}{
(n-1)(n-2)}+\frac{\lambda_{4}}{(n-2)(n-3)}+...+\frac{\lambda_{n-2}}{(4)(3)}-\frac{a}{3}-\frac{b\sqrt{3}}{3} \geq0 \\
\frac{1}{n}+\frac{\lambda_{2}}{n(n-1)}+\frac{\lambda_{3}}{
(n-1)(n-2)}+\frac{\lambda_{4}}{(n-2)(n-3)}+...+\frac{\lambda_{n-2}}{(4)(3)}-\frac{a}{3}+\frac{b\sqrt{3}}{3}
\geq0 \\
\frac{1}{n}+\frac{\lambda_{2}}{n(n-1)}+\frac{\lambda_{3}}{
(n-1)(n-2)}+\frac{\lambda_{4}}{(n-2)(n-3)}+...+\frac{\lambda_{n-3}}{(5)(4)}+\frac{3\lambda_{n-2}}{4}\geq0
\end{array}
\right .$$
then $(1,\lambda_2, ..., \lambda_{n-2}, a+Ib, a-Ib)$ is
the spectrum of an $n\times n$ doubly stochastic matrix
\end{theorem}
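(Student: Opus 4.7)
The plan is to verify, in essentially the order the theorem is derived in the text, that the explicit matrix $W_n\Pi_n W_n^{-1}$ is a doubly stochastic matrix with the prescribed spectrum under the four stated inequalities.

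First, I would check that $W_n$ has pattern $\mathcal{S}$. Its first column is $\tfrac{1}{\sqrt n}e_n$ by construction, and each remaining column sums to zero: columns $2$ through $n-2$ are the corresponding columns of the Soules matrix $V_n$ and hence have zero column sum, while the last two columns have zero column sum because $1+w+w^2=0$ for the primitive cubic root of unity $w=-1/2+I\sqrt{3}/2$. By the Observation of Section~1 applied to the generalized doubly stochastic setting over $\mathbf{C}$, the matrix $A:=W_n\Pi_n W_n^{-1}$ automatically lies in $\Omega^1(n,\mathbf{C})$ and has the desired spectrum $(1,\lambda_2,\ldots,\lambda_{n-2},a+Ib,a-Ib)$. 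Moreover $A$ has real entries because the complex eigenvectors (columns $n-1,n$ of $W_n$) are conjugates of each other and the complex eigenvalues $a\pm Ib$ are conjugates. Thus the only task left is to determine when $A\geq 0$.

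Next, I would exploit the block decomposition implicit in the construction. Since the last two columns of $W_n$ vanish in rows $4,\ldots,n$, the contribution of the complex eigenvalues $a\pm Ib$ is confined to the principal $3\times 3$ upper-left block. Write
\[
A \;=\; \begin{pmatrix} A_{11} & B \\ B^T & C \end{pmatrix},
\]
where $A_{11}$ is $3\times 3$ and $C$ is $(n-3)\times(n-3)$. The blocks $B$ and $C$ coincide with the corresponding blocks of the Soules-type matrix $V_n\Lambda' V_n^T$ in which $\Lambda'$ has the same first $n-2$ diagonal entries as $\Pi_n$ but whose last two diagonal entries are any values (they are killed by the zeros in the last two columns of $W_n$ on rows $4,\ldots,n$). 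From Soules' analysis, $B\geq 0$ entrywise, $C$ has nonnegative off-diagonal entries, and the diagonal entries of $C$ are increasing so the smallest is $c_{11}$; the explicit formula for $c_{11}$ is the fourth displayed inequality of the theorem. The block $A_{11}$ decomposes as the sum of a Soules-type contribution (from $1,\lambda_2,\ldots,\lambda_{n-2}$) plus the circulant contribution $\tfrac{1}{3}\bigl(\text{circulant from Example }4\bigr)$ coming from $a\pm Ib$. The Soules contribution to $A_{11}$ is a multiple of $J_3$ (since rows $1,2,3$ of the first $n-2$ columns of $W_n$ are identical), so $A_{11}$ itself is circulant with first row $(a_{11},a_{12},a_{13})$ exactly as written in the paragraph preceding the theorem.

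Finally, I would collect the nonnegativity constraints. Since $A_{11}$ is circulant, $A_{11}\geq 0$ reduces to the three inequalities $a_{11}\geq 0$, $a_{12}\geq 0$, $a_{13}\geq 0$, which are precisely the first three displayed inequalities. Combined with the fourth condition $c_{11}\geq 0$ and the already-established nonnegativity of $B$ and of the off-diagonal part of $C$, this yields $A\geq 0$, completing the proof.

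The routine but only mildly delicate point is the identification of $A_{11}$ as a genuine circulant; this uses the structure of the first three rows of $W_n$ (identical on the Soules columns, and matching the Example~4 eigenvector pattern on the last two columns) to ensure that the Soules part contributes equally to every entry of the $3\times 3$ block, while the complex part provides the characteristic circulant obtained in Example~4. The remaining verifications that the minima of the various diagonals of $C$ and the monotonicity of those diagonals occur at $c_{11}$ are exactly the computations done by Soules and were already invoked earlier in the paper, so they can be quoted rather than redone.
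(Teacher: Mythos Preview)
Your proposal is correct and follows essentially the same approach as the paper: construct $W_n$, observe the block decomposition $W_n\Pi_n W_n^{-1}=\left(\begin{smallmatrix}A_{11}&B\\B^T&C\end{smallmatrix}\right)$ with $A_{11}$ the $3\times 3$ circulant described in Example~4, invoke Soules' analysis for the nonnegativity of $B$ and the off-diagonal of $C$ together with the monotonicity of the diagonal of $C$, and read off the four inequalities. You supply more explicit justification (pattern~$\mathcal S$, reality of $A$, why $A_{11}$ is circulant) where the paper simply writes ``an inspection shows,'' but the route is the same.
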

Note that the above theorem deals with the case of at most two
nonreal eigenvalues namely $a+Ib$ and $a-Ib$ in the list
$\{1,\lambda_2, ..., \lambda_{n-2}, a+Ib, a-Ib\}.$  Now, we can
use the improvement process of Step4 to obtain conditions on a
list with more than just two complex eigenvalues. This can be
illustrated by the following example again for the case $n=6.$
\paragraph{\bf{Example 5}} we apply the improvement process
described in Step4 of our algorithm to obtain the following matrix
$$ W_\beta=\left(
\begin{array}{ccccccc}
\frac{1}{\sqrt{6}} & \frac{1}{\sqrt{6}} &
0 & 0 & \frac{1}{\sqrt{3}} & \frac{1}{\sqrt{3}}\\
\frac{1}{\sqrt{6}} & \frac{1}{\sqrt{6}}  &
0 & 0 & \frac{w^2}{\sqrt{3}}& \frac{w}{\sqrt{3}} \\
\frac{1}{\sqrt{6}} & \frac{1}{\sqrt{6}} &
0 &  0 & \frac{w}{\sqrt{3}}& \frac{w^2}{\sqrt{3}} \\
\frac{1}{\sqrt{6}} & -\frac{1}{\sqrt{6}} &
 \frac{1}{\sqrt{3}} & \frac{1}{\sqrt{3}}& 0 & 0\\
 \frac{1}{\sqrt{6}} & -\frac{1}{\sqrt{6}}  &
 \frac{w^2}{\sqrt{3}}& \frac{w}{\sqrt{3}} & 0 & 0 \\
 \frac{1}{\sqrt{6}} & -\frac{1}{\sqrt{6}} &
 \frac{w}{\sqrt{3}}& \frac{w^2}{\sqrt{3}} & 0 & 0\\
\end{array}
\right) $$

If we let $\alpha$ be the diagonal matrix $1\oplus f\oplus
a+Ib\oplus a-Ib\oplus c+Id\oplus c-Id,$ such that $|a+Ib|\leq 1,$
  $|c+Id|\leq 1,$  $-1\leq f\leq 1$ and $ trace(\alpha)= 1+ f +2a+2c\geq 0.$
  Then an inspection shows that the matrix
$W_\beta\alpha W_\beta^{-1}$ has the form $W_\beta\alpha
W_\beta^{-1}= \left(
\begin{array}{cc}
B & C  \\
C & D
\end{array}
\right)  $ where $$ B=   \left(
\begin{array}{ccc}
\frac{1}{6}+\frac{f}{6}+\frac{2c}{3} &
\frac{1}{6}+\frac{f}{6}-\frac{c}{3}-\frac{d\sqrt{3}}{3} &
\frac{1}{6}+\frac{f}{6}-\frac{c}{3}+\frac{d\sqrt{3}}{3}  \\
 \frac{1}{6}+\frac{f}{6}-\frac{c}{3}+\frac{d\sqrt{3}}{3} & \frac{1}{6}+\frac{f}{6}+\frac{2c}{3} &
\frac{1}{6}+\frac{f}{6}-\frac{c}{3}-\frac{d\sqrt{3}}{3}  \\
\frac{1}{6}+\frac{f}{6}-\frac{c}{3}-\frac{d\sqrt{3}}{3} &
\frac{1}{6}+\frac{f}{6}-\frac{c}{3}+\frac{d\sqrt{3}}{3} &
\frac{1}{6}+\frac{f}{6}+\frac{2c}{3}
\end{array}
\right) ,   C=   \left(
\begin{array}{ccc}
  \frac{1}{6}-\frac{f}{6} & \frac{1}{6}-\frac{f}{6} & \frac{1}{6}-\frac{f}{6} \\
  \frac{1}{6}-\frac{f}{6} & \frac{1}{6}-\frac{f}{6} & \frac{1}{6}-\frac{f}{6} \\
\frac{1}{6}-\frac{f}{6} & \frac{1}{6}-\frac{f}{6} &
\frac{1}{6}-\frac{f}{6}
\end{array}
\right)     \mbox{ and } $$
 $D=   \left(
\begin{array}{ccc}
  \frac{1}{6}+\frac{f}{6}+\frac{2a}{3} &
 \frac{1}{6}+\frac{f}{6}-\frac{a}{3}-\frac{b\sqrt{3}}{3} & \frac{1}{6}+\frac{f}{6}-\frac{a}{3}+\frac{b\sqrt{3}}{3} \\
\frac{1}{6}+\frac{f}{6}-\frac{a}{3}+\frac{b\sqrt{3}}{3} &
\frac{1}{6}+\frac{f}{6}+\frac{2a}{3}&
\frac{1}{6}+\frac{f}{6}-\frac{a}{3}-\frac{b\sqrt{3}}{3} \\
\frac{1}{6}+\frac{f}{6}-\frac{a}{3}-\frac{b\sqrt{3}}{3} &
\frac{1}{6}+\frac{f}{6}-\frac{a}{3}+\frac{b\sqrt{3}}{3} &
\frac{1}{6}+\frac{f}{6}+\frac{2a}{3}
\end{array}
\right) . $ Now the conditions for which the matrix $W_\beta\alpha
W_\beta^{-1}$ is nonnegative result in the following conclusion.
\begin{theorem}
Let $1, f, a+Ib, a-Ib, c+Id, c-Id$ be complex numbers such that
$|a+Ib|\leq 1,$ $|c+Id|\leq 1,$  $-1\leq f\leq 1$ and $ 1+ f
+2a+2c\geq 0.$   If
$$
\left \{
\begin{array}[c]{l}
\frac{1}{6}+\frac{f}{6}+\frac{2a}{3}\geq 0\\
 \frac{1}{6}+\frac{f}{6}+\frac{2c}{3}\geq 0\\
 \frac{1}{6}+\frac{f}{6}-\frac{a}{3}+\frac{b\sqrt{3}}{3}\geq 0\\
 \frac{1}{6}+\frac{f}{6}-\frac{a}{3}-\frac{b\sqrt{3}}{3}\geq 0\\
 \frac{1}{6}+\frac{f}{6}-\frac{c}{3}+\frac{d\sqrt{3}}{3}\geq 0\\
 \frac{1}{6}+\frac{f}{6}-\frac{c}{3}-\frac{d\sqrt{3}}{3}\geq 0\\
\end{array}
\right .
$$
then $(1,f,a+Ib, a-Ib, c+Id, c-Id)$ is the spectrum of a $6\times
6$ doubly stochastic matrix
\end{theorem}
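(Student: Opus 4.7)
The plan is to reuse the explicit construction carried out in Example 5 and turn it into a proof: exhibit the matrix $M = W_\beta \alpha W_\beta^{-1}$ and verify that, under the six assumed inequalities, $M$ is entrywise nonnegative. Both double stochasticity and the prescribed spectrum will then follow automatically.

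First I would confirm that $W_\beta$ has pattern $\mathcal{S}$. A direct inspection of its explicit form in Example 5 shows that the first column is a scalar multiple of $e_6$ while each of the remaining five columns has entries summing to zero. Hence by Observation 1.4 applied with $r = 1$, the matrix $M = W_\beta \alpha W_\beta^{-1}$ lies in $\Omega^1(6,\mathbf{C})$, so every row sum and every column sum of $M$ equals $1$. This reduces the problem to establishing $M \geq 0$.

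Second, I would invoke the block decomposition $M = \left(\begin{array}{cc} B & C \\ C & D \end{array}\right)$ already computed in Example 5. The off-diagonal block $C$ has every entry equal to $\frac{1-f}{6}$, which is nonnegative since $f \leq 1$. The blocks $B$ and $D$ are each $3\times 3$ circulant matrices, so their nonnegativity is equivalent to the nonnegativity of their three distinct entries. For $B$ these are the three expressions in $c, d, f$ listed in the theorem, and for $D$ they are the three expressions in $a, b, f$. Consequently the six hypotheses of the theorem are precisely equivalent to $M \geq 0$.

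Finally, because $\alpha$ is the diagonalisation of $M$ with respect to $W_\beta$, the spectrum of $M$ equals the diagonal of $\alpha$, namely $(1, f, a+Ib, a-Ib, c+Id, c-Id)$, which is the desired conclusion. The only genuine technical step is the block computation of $W_\beta \alpha W_\beta^{-1}$, which involves complex arithmetic through the primitive cube root of unity $w$; but this has been carried out in Example 5, so the remaining work is bookkeeping — noting the circulant structure of $B$ and $D$, and observing that the standing trace condition $1 + f + 2a + 2c \geq 0$ is in fact implied by summing the first two of the six inequalities.
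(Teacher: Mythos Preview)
Your proposal is correct and follows exactly the paper's own argument: the paper's ``proof'' of this theorem is nothing more than the computation in Example~5, which exhibits $W_\beta\alpha W_\beta^{-1}$ in block form and observes that the six listed inequalities are precisely the nonnegativity conditions on the entries of the circulant blocks $B$ and $D$ (the block $C$ being automatically nonnegative since $f\le 1$). Your only additions are the explicit appeal to Observation~1.4 for double stochasticity and the remark that the trace condition is redundant, both of which are welcome clarifications rather than departures from the paper's route.
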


 Finally, it is worth mentioning here that even applying only the
first two steps of our algorithm has an interest of its own since
it has the advantage of yielding new conditions for the (DIEP) for
the chosen dimension $k$ in Step1. To illustrate this, we include
the following example.
 \paragraph{\bf{Example 6}}
 Consider the following $4\times 4$  nonsymmetric
permutation matrix $p= \left( \begin{array}{cccc}
0& 1 & 0 & 0 \\
0 & 0 & 0 & 1 \\
1 & 0 & 0 & 0 \\
0 & 0 & 1 & 0 \\
\end{array}
\right) .$ Using Maple for example, the complex eigenvectors of
$p$ are given by: $x_1=(1,1,1,1),$ $x_2=(1,-1,-1,1),$
$x_3=(-I,1,-1,I),$ and $x_4=(I,1,-1,-I).$ Then, from these
eigenvectors, we form the complex pattern $\mathcal{S}$ matrix
$X=\left(
\begin{array}{cccc}
\frac{1}{2}& \frac{1}{2} & -\frac{I}{2} & \frac{I}{2} \\
\frac{1}{2} & -\frac{1}{2} & \frac{1}{2} & \frac{1}{2} \\
\frac{1}{2} & -\frac{1}{2} & -\frac{1}{2} & -\frac{1}{2} \\
\frac{1}{2} & \frac{1}{2} & \frac{I}{2} & -\frac{I}{2} \\
&  &  &
\end{array}
\right) .$ In addition, we let $\Pi_4$ be the diagonal matrix with
diagonal entries as $1$, $c$, $a+Ib$, $a-Ib,$ with
$trace(\Pi_4)=1+c+2a\geq 0,$   $-1\leq c \leq 1$ and $|a+Ib|\leq
1.$ Next, we check the conditions for which $X\Pi_4 X^{-1}$ is
nonnegative as follows. A simple matrix multiplication shows that
$$X\Pi_4 X^{-1}=\left(
\begin{array}{cccc}
\frac{1}{4} + \frac{c}{4}+ \frac{a}{2}&
\frac{1}{4} - \frac{c}{4}- \frac{b}{2} &
\frac{1}{4} - \frac{c}{4}+ \frac{b}{2}&
\frac{1}{4} + \frac{c}{4}- \frac{a}{2} \\
\frac{1}{4} - \frac{c}{4}+ \frac{b}{2}&
\frac{1}{4} + \frac{c}{4}+ \frac{a}{2} &
\frac{1}{4} + \frac{c}{4}- \frac{a}{2}&
 \frac{1}{4} - \frac{c}{4}- \frac{b}{2} \\
\frac{1}{4} - \frac{c}{4}- \frac{b}{2}&
\frac{1}{4} +\frac{c}{4}- \frac{a}{2} &
\frac{1}{4} + \frac{c}{4}+ \frac{a}{2}&
 \frac{1}{4} - \frac{c}{4}+\frac{b}{2} \\
\frac{1}{4} + \frac{c}{4}- \frac{a}{2}&
\frac{1}{4} - \frac{c}{4}+ \frac{b}{2} &
\frac{1}{4} - \frac{c}{4}- \frac{b}{2}&
\frac{1}{4} + \frac{c}{4}+\frac{a}{2} \\
\end{array}
\right) .$$ Note that each diagonal entry of $X\Pi_4 X^{-1}$ is
equal to $\frac{1}{4}trace(\Pi_4)$ and therefore is nonnegative.
Hence the conditions for which $X\Pi_4 X^{-1}$ is nonnegative, are
given by:
\begin{equation}
\left\{
\begin{array}
[c]{l}
\frac{1}{4} - \frac{c}{4}- \frac{b}{2}\geq 0\\
\frac{1}{4} - \frac{c}{4}+ \frac{b}{2}\geq 0 \\
\frac{1}{4} + \frac{c}{4}- \frac{a}{2}\geq 0
\end{array}
\right.
\end{equation}

Thus we have the following theorem.
\begin{theorem}
Let $1$, $c$, $a+Ib$, $a-Ib,$ be  complex numbers with $1+c+2a\geq
0,$   $-1\leq c \leq 1$ and $|a+Ib|\leq 1.$ If
$$\left\{
\begin{array}
[c]{l}
1 - c- 2b\geq 0\\
1 - c+ 2b\geq 0 \\
1 + c- 2a\geq 0
\end{array}
\right .   , $$  then $(1,c,a+Ib, a-Ib)$ is the spectrum of a
$4\times 4$ doubly stochastic matrix $D$.
\end{theorem}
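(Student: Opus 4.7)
The plan is to simply formalize the construction carried out in Example 6 immediately preceding the theorem. Specifically, I would take the candidate matrix $D := X\Pi_4 X^{-1}$, where
$$X=\left(\begin{array}{cccc}\tfrac{1}{2}&\tfrac{1}{2}&-\tfrac{I}{2}&\tfrac{I}{2}\\ \tfrac{1}{2}&-\tfrac{1}{2}&\tfrac{1}{2}&\tfrac{1}{2}\\ \tfrac{1}{2}&-\tfrac{1}{2}&-\tfrac{1}{2}&-\tfrac{1}{2}\\ \tfrac{1}{2}&\tfrac{1}{2}&\tfrac{I}{2}&-\tfrac{I}{2}\end{array}\right)$$
is the pattern $\mathcal S$ matrix of eigenvectors of the $4\times 4$ permutation $p$, and $\Pi_4=\mathrm{diag}(1,c,a+Ib,a-Ib)$. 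The first step is to invoke Observation 1.4: since $X$ has pattern $\mathcal S$ and the leading diagonal entry of $\Pi_4$ is $1$, the matrix $D$ automatically lies in $\Omega^1(4,\mathbf C)$, so all row and column sums are $1$. By construction the spectrum of $D$ is $(1,c,a+Ib,a-Ib)$.

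The second step is to check that $D$ is real. Since the complex eigenvalues $a\pm Ib$ are a conjugate pair and the corresponding columns of $X$ (namely $x_3$ and $x_4$) are also a conjugate pair, the spectral sum $\Pi_4$ is invariant under simultaneous complex conjugation of $X$ and $\Pi_4$, which forces $D=\bar D$. Alternatively, this is evident from the explicit computation displayed in Example 6.

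The third and main step is the nonnegativity check. Reading off the formula for $D$ from Example 6, the four diagonal entries equal $\tfrac14+\tfrac{c}{4}+\tfrac{a}{2}=\tfrac14\,\mathrm{trace}(\Pi_4)$, which is $\ge 0$ by the assumption $1+c+2a\ge 0$. The twelve off-diagonal entries assume only three distinct values, namely $\tfrac14-\tfrac{c}{4}-\tfrac{b}{2}$, $\tfrac14-\tfrac{c}{4}+\tfrac{b}{2}$, and $\tfrac14+\tfrac{c}{4}-\tfrac{a}{2}$; each is nonnegative precisely under one of the three hypothesized inequalities $1-c-2b\ge 0$, $1-c+2b\ge 0$, $1+c-2a\ge 0$. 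Therefore $D$ is a $4\times 4$ doubly stochastic matrix realizing the prescribed spectrum.

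There is no real obstacle here beyond bookkeeping: the entire content of the proof is the observation that the three listed inequalities exactly characterize nonnegativity of the three distinct off-diagonal entries of $X\Pi_4X^{-1}$, once one has verified the standing conditions $1+c+2a\ge 0$, $-1\le c\le 1$, $|a+Ib|\le 1$ that guarantee $D$ is a well-defined doubly stochastic candidate with Perron-admissible spectrum. The only thing worth double-checking is the reality of $D$, but this is immediate from the conjugate-pair structure of both the columns of $X$ and the last two eigenvalues.
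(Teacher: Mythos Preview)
Your proposal is correct and follows exactly the paper's approach: the theorem is stated immediately after Example~6 as its direct consequence, so the proof amounts to constructing $D=X\Pi_4 X^{-1}$ with the pattern~$\mathcal{S}$ matrix $X$ and diagonal $\Pi_4$ from that example, and then reading off from the explicit $4\times4$ formula that the diagonal entries equal $\tfrac{1}{4}\mathrm{trace}(\Pi_4)\ge 0$ while the three distinct off-diagonal values are nonnegative precisely under the three hypothesized inequalities. Your added remark about reality of $D$ via the conjugate-pair structure is a helpful clarification that the paper leaves implicit in the displayed real-valued matrix.
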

Note that if we wish to have the realizing matrix $D$ has zero
trace, then it suffices to add the extra constraint
$trace(\Theta)=1+c+2a= 0$ and then we obtain the following.
\begin{theorem}
Let $1$, $c$, $a+Ib$, $a-Ib,$ be  complex numbers with $1+c+2a=
0,$ $-1\leq c \leq 1$ and $|a+Ib|\leq 1.$ If
$$\left\{
\begin{array}
[c]{l}
1 - c- 2b\geq 0\\
1 - c+ 2b\geq 0 \\
-1\leq a\leq 0
\end{array}
\right .   , $$  then $(1,c,a+Ib, a-Ib)$ is the spectrum of a
$4\times 4$ doubly stochastic matrix $D$ with zero trace.
\end{theorem}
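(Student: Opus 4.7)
The plan is to derive this statement as a direct specialization of the preceding theorem (the one built from $X\Pi_4 X^{-1}$), adding the trace-zero constraint as a single extra equation and tracking how it simplifies the system of inequalities.

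First I would recall the explicit form of $D=X\Pi_4 X^{-1}$ already computed above, where each diagonal entry equals $\tfrac{1}{4}\,\mathrm{trace}(\Pi_4)=\tfrac{1}{4}(1+c+2a)$. Therefore imposing $1+c+2a=0$ makes every diagonal entry of $D$ equal to zero, which simultaneously (a) automatically verifies the diagonal nonnegativity requirement with equality and (b) delivers the trace-zero conclusion free of charge. It remains only to check the off-diagonal nonnegativity, which by the previous theorem reduces to the three inequalities $1-c-2b\ge 0$, $1-c+2b\ge 0$, and $1+c-2a\ge 0$.

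Next I would substitute $c=-1-2a$ (from the equality $1+c+2a=0$) into the third inequality: it becomes $1+(-1-2a)-2a=-4a\ge 0$, equivalently $a\le 0$. Combined with the standing hypothesis $|a+Ib|\le 1$, which forces $a\ge -1$, this is precisely the clause $-1\le a\le 0$ appearing in the statement. The first two inequalities $1-c\pm 2b\ge 0$ are carried over verbatim from the previous theorem, so no further rewriting is needed. One should also verify compatibility with the standing range $-1\le c\le 1$: for $a\in[-1,0]$ the value $c=-1-2a$ runs over $[-1,1]$, so this is automatic and introduces no hidden constraint.

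The main (and essentially only) subtle point is to confirm that the three off-diagonal inequalities in (16) of the previous theorem really do govern nonnegativity for all twelve off-diagonal entries of $D$ once the diagonal is zero. This is immediate from the symmetry pattern displayed in the explicit form of $X\Pi_4 X^{-1}$: its off-diagonal entries take only the three distinct values $\tfrac14-\tfrac{c}{4}\pm\tfrac{b}{2}$ and $\tfrac14+\tfrac{c}{4}-\tfrac{a}{2}$ (each occurring four times), so no new inequality is created when the diagonal collapses to zero. With this observation in hand, the theorem follows by invoking the previous theorem under the added assumption $1+c+2a=0$, and the conclusion that the realizing matrix $D$ has zero trace is built into the construction.
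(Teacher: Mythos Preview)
Your proposal is correct and follows exactly the approach the paper takes: the paper simply remarks that adding the extra constraint $1+c+2a=0$ to the previous theorem yields the zero-trace result, and you have filled in the details of that specialization (including the rewriting of $1+c-2a\ge 0$ as $-1\le a\le 0$) carefully and correctly.
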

\section*{Conclusion} We described an algorithm that yields many
sufficient conditions for three inverse eigenvalue problems
concerning doubly stochastic matrices. Although at this stage we
did not offer complete solutions, we leave it for future work to
check if it is possible to characterize all the necessary points
in Step1 needed for this algorithm to offer complete solutions at
least for low dimensions. However, besides that it offers many new
partial results, the main importance of this algorithm lies in the
fact that it can be used as a checking point in case of a
conjecture concerning complete solutions is given for any of these
three interesting problems.


\begin{thebibliography}{99}

\bibitem{b:bap}  R. B. Bapat and T. E. S. Raghavan,   Non-negative
 matrices and applications,  Cambridge University,  Cambridge, 1997.

\bibitem{i:in}  I. Bengtsson,   The importance of being
unistochastic, (2003)  arXiv:quant-ph/0403088 v1.

 \bibitem{i:ing} I. Bengtsson, \AA. Ericsson, M. Ku\'{s}, W. Tadej and K.\.{Z}yczkowski,
   Birkoff's polytope and unistochastic matrices: N=3 and N=4, (2004)  arXiv:math.CO/0402325 v2.

\bibitem{b:ber}  A. Berman and R.~J. Plemmons,  Nonnegative matrices in
the mathematical sciences,  SIAM Publications, Philadelphia, 1994.

\bibitem{b:bha}  R. Bhatia,  Matrix analysis,  Springer-Verlag,
 New York, 1997.

\bibitem{b:bru}  R. Brualdi, Some applications of
 doubly-stochastic matrices, Lin. Alg. Appl. 107, (1988) pp.~77-89.

 \bibitem{c:chu}  M. T. Chu and G. H. Golub,
  Inverse eigenvalue problems: Theory, algorithms and applications, Oxford University Press, 2005.

\bibitem{c:cru}  A. Cruse, A note on the symmetric doubly-stochastic
matrices, Discrete Mathematics, 13, (1975) pp.~109-119.

\bibitem{e:eg}  P. D. Egleston, T. D. Lenker and S. K.Narayan,
 The nonnegative inverse eigenvalue problem,  Lin. Alg. Appl.  379, (2004) pp.~475-490.

\bibitem{h:hog}  L. Hogben, Handbook of linear algebra,
 Chapman and Hall/CRC,  New York, 2007.

\bibitem{h:hor} A. Horn and C.~R. Johnson, Matrix analysis,
 Cambridge university,  Cambridge, 1981.

\bibitem{h:hw} S. G. Hwang and S. S. Pyo, The inverse eigenvalue problem for
symmetric doubly stochastic matrices, Lin. Alg. Appl., 379, (2004)
pp.~77-83.


\bibitem{j:jo} C.~R. Johnson, Row stochastic matrices similar to
doubly-stochastic matrices,  Lin. Multilin. Alg., 10, (1981) pp.~
113-130.

\bibitem{l:lo}  C.~R. Johnson, T. Laffey and R. Loewy, The real and the
symmetric nonnegative inverse eigenvalue problems are different,
Proc. Amer. Math. soc.  Vol. 124 (Number 12), (1996)
pp.~3647-3651.

\bibitem{k:ka} I. Kaddoura and B. Mourad, On a conjecture concerning the inverse
 eigenvalue problem for $4\times 4$ symmetric
doubly stochastic matrices,  Int. Math. Forum 3, 31,(2008)
pp.~1513-1519.

\bibitem{k:kat} M. Katz, On the extreme points of a certain convex
polytope, J. Combin. Theo., 8,(1970) pp.417-423.

\bibitem{l:loe} R. Loewy and D. London, A note on an inverse problem for
non-negative matrices,  Lin. Alg. Appl.  6, (1978) pp.~83-90.

\bibitem{m:min} H. Minc,  Non-negative matrices,
 Berlin Press,  New York, 1988.

\bibitem{m:mo}  B. Mourad, An inverse problem for symmetric
doubly stochastic matrices,  Inverse Problems, 19, (2003)
pp.~821-831.

\bibitem{m:mou}  B. Mourad, On a Lie-theoretic approach to
generalized doubly stochastic matrices and applications, Linear
and Multilin. Alg., 52, (2004) pp.~99-113.

\bibitem{m:mour} B. Mourad, A note on the boundary of the set
where the decreasingly ordered spectra of symmetric doubly
stochastic matrices lie, Lin. Alg. Appl., 416, (2006) pp.~546-558.

\bibitem{p:per} H. Perfect and L. Mirsky,  Spectral properties of
doubly-stochastic matrices,  Monatsh. Math.,  69, (1965)
pp.~35-57.

\bibitem{r:re}  R. Reams, An inequality for nonnegative matrices and
the eigenvalue problem, Linear and Multilin. Alg., 41, (1996)
pp.~367-375.

\bibitem{r:rea}  R. Reams,  Construction of trace zero symmetric
 stochastic matrices for the inverse eigenvalue problem, ELA 9,
 (2002) pp.~270-275.


\bibitem{s:sen}  E. Seneta,  Non-negative
 matrices and Markov chains,  2nd ed.  Springer, New York, 1981.

 \bibitem{s:sot}  R. Soto,   The inverse spectrum problem for
 positive generalized stochastic matrices,
 Computers Math. Applic., 43, (2002) pp.641-656.

\bibitem{s:sou}  G. Soules,   Constructing symmetric
 nonnegative matrices, Lin. Multilin. Alg., 13, (1983) pp.241-251.

 \bibitem{z:zy} K. \.{Z}yczkowski, M. Ku\'{s}, W. S{\l}mczyo\'{n}ski and H.-J.Sommers,
 Random unistochastic matrices,  J. Phys. A: Math.Gen. 36, (2003) pp.~3425-3450.

\end{thebibliography}
\end{document}